\def\R{{\mathbb {R}}}
\def\N{{\mathbb {N}}}
\def\A{{\mathcal{A}}}
\def\K{{\mathcal{K}}}
\def\F{{\mathcal{F}}}
\def\lam{\lambda}
\def\vp{\varphi}
\def\ve{\varepsilon}
\def\cd{\rightharpoonup}
\def\cp{\operatorname {\text{cap}}}
\newtheorem{teo}{Theorem}[section]
\newtheorem{lema}[teo]{Lemma}
\newtheorem{prop}[teo]{Proposition}
\newtheorem{corol}[teo]{Corollary}
\theoremstyle{remark}
\newtheorem{remark}[teo]{Remark}
\theoremstyle{definition}
\newtheorem{defi}[teo]{Definition}
\numberwithin{equation}{section}
\begin{document}
	
\title{Optimal partition problems for the fractional laplacian}
\author[A. Ritorto]{Antonella Ritorto}

\address{Departamento de Matem\'atica, FCEN -- Universidad de Buenos Aires and IMAS -- CONICET, Buenos Aires, Argentina}

\email[A. Ritorto]{aritorto@dm.uba.ar}
	
\subjclass[2010]{35R11, 49Q10}
	
\keywords{Fractional partial differential equations, optimal partition}

\begin{abstract}
In this work, we prove an existence result for an optimal partition problem of the form
$$
\min \{F_s(A_1,\dots,A_m)\colon A_i \in \A_s, \, A_i\cap A_j =\emptyset \mbox{ for } i\neq j\},
$$
where $F_s$ is a cost functional with suitable assumptions of monotonicity and lowersemicontinuity, $\A_s$ is the class of admissible domains and the condition $A_i\cap A_j =\emptyset$ is understood in the sense of the Gagliardo $s$-capacity, where $0<s<1$. Examples of this type of problem are related to the fractional eigenvalues. In addition, we prove some type of convergence of the $s$-minimizers to the minimizer of the problem with $s=1$, studied in \cite{Bucur-Buttazzo-Henrot}. 
\end{abstract}
	
\maketitle


\section{Introduction}

Let $\Omega$ be an open bounded subset of $\R^n$. Fix  $0<s<1$ and $m\in \N$. We consider optimal partition problems of the form
\begin{equation}\label{prob}
\min\left\{ F_s(A_1, \dots, A_m)\colon A_i \in \A_s(\Omega), A_i\cap A_j = \emptyset \mbox{ for } i\neq j \right\},
\end{equation}
where $F_s$ is a cost functional which satisfies some lower semicontinuity and monotonicity assumptions and $\A_s(\Omega)$ denotes the class of admissible domains. 

Optimal partition problems were studied by several authors: Bucur, Buttazzo and Henrot \cite{Bucur-Buttazzo-Henrot}, Bucur and Velichkov \cite{Bucur-Velichkov}, Caffarelli and Lin \cite{Caffarelli-Lin}, Conti, Terracini and Verzini \cite{Conti-Terracini-Verzini-03,Conti-Terracini-Verzini}, Helffer,  Hoffmann-Ostenhof and Terracini \cite{Helffer-Hoffmann-Ostenhof-Terracini}, among others.





In \cite{Caffarelli-Lin}, Caffarelli and Lin established the existence of classical solutions to an optimal partition problem for the Dirichlet eigenvalue, as well as the regularity of free interfaces. One more recent work about regularity of solutions to optimal partition problems  involving eigenvalues of the Laplacian is \cite{Ramos-Tavares-Terracini}, where Ramos, Tavares and Terracini used the existence result of \cite{Bucur-Buttazzo-Henrot} and proved that the free boundary of the optimal partition is locally a $C^{1,\alpha}$-hypersurface up to a residual set.

 
 Conti, Terracini and Verzini proved in \cite{Conti-Terracini-Verzini-03} the existence of the minimal partition for a problem in N-dimensional
 domains related to the method of nonlinear eigenvalues introduced by Nehari in \cite{Nehari}. Moreover, they showed some connections between
 the variational problem and the behavior of competing species systems with large
 interaction. 
 
 Tavares and Terracini proved in \cite{Tavares-Terracini} the existence of infinitely many sign-changing solutions for the system of $m$-Schr\"odinger equations with competition interactions and the relation between the energies associeted and an optimal partition problem which involves $m$-eigenvalues of the Laplacian operator. 

In a recent work  \cite{BRS}, we studied a general shape optimization problem where $m=1$.

For more references related to optimal partition problems see, for instance, \cite{Bonnaillie-Noel-Llena, Bozorgnia, Bucur-Buttazzo, Buttazzo, Conti-Terracini-Verzini,Helffer-Hoffmann-Ostenhof,Osting-White-Oudet, Snelson}

\medskip

The goal of this article is to prove the existence of an optimal partition for the problem \eqref{prob}, where $F_s$ is decreasing in each coordinate and lower semicontinuous for a suitable notion of convergence in $\A_s(\Omega)$, which is the set of admissible domains. This existence result is carried out in Section \ref{sec.teo1}. The dependence on $s$ is related to the Gagliardo $s$-capacity measure and the fractional Laplacian operador $(-\Delta)^s$, we will detail that and other preliminares in Section \ref{sec.preli}.

We follow the ideas given by Bucur, Buttazzo and Henrot in \cite{Bucur-Buttazzo-Henrot}, where was proved the existence of solution to \eqref{prob} in the case $s=1$. 

Furthermore, we prove convergence of the minima and the optimal partition shapes to those of the case $s=1$, studied in \cite{Bucur-Buttazzo-Henrot}. This last aim is accomplished in Section \ref{sec.teo2}.

At the end of this work, we include an Appendix with useful properties of $s$-capacity. Most of those results, we suppose are well-known. Despite of that, we decided to incorporate them for completeness.

\section{Preliminaries and statements} \label{sec.preli}
\subsection{Notations and preliminaries}
Given $s\in(0,1)$ we consider the fractional laplacian, that for smooth functions $u$ is defined as
\begin{align*}
(-\Delta)^s u(x) &:= c(n,s)\mbox{p.v.}\int_{\R^n} \frac{u(x)-u(y)}{|x-y|^{n+2s}} \, dy\\
&=-\frac{c(n,s)}{2}\int_{\R^n} \frac{u(x+z) - 2u(x) + u(x-z)}{|z|^{n+2s}}\, dz. 
\end{align*}
where $c(n,s):= ( \int_{\R^n} \frac{1-\cos\zeta_1}{|\zeta|^{n+2s}}d\zeta )^{-1}$ is a normalization constant. 

The constant $c(n,s)$ is chosen in such a way that the following identity holds,
$$
(-\Delta)^s u = \F^{-1}(|\xi|^{2s}\F(u)),
$$
for $u$ in the Schwarz class of rapidly decreasing and infinitely differentiable functions, where $\F$ denotes the Fourier transform. See \cite[Proposition 3.3]{DiNezza-Palatucci-Valdinoci}.


The natural functional setting for this operator is the fractional Sobolev space $H^s(\R^n)$ defined as
\begin{align*}
H^s(\R^n)&:=\left\{u\in L^2(\R^n) \colon \frac{u(x)-u(y)}{|x-y|^{\frac{n}{2}+s}}\in L^2(\R^n \times \R^n) \right\}\\
&= \left\{ u\in L^2(\R^n)\colon |\xi|^2\F(u)\in L^2(\R^n)\right\}
\end{align*}
which is a Banach space endowed with the norm $\|u\|^2_s:= \|u\|_2^2 + [u]^2_s $,
where the term
$$
	[u]^2_s:=\iint_{\R^n \times \R^n} {\frac{|u(x)-u(y)|^2}{|x-y|^{n+2s}} \, dxdy} 
$$
is the so-called Gagliardo semi-norm of $u$. 

To comtemple the {\em boundary} condition, we work in $H_0^s(\Omega)$, which is the closure of $C_c^\infty(\Omega)$ in the norm $\| \cdot \|_s$. When $\Omega$ is a Lipschitz domain, $H^s_0(\Omega)$ coincides with the space of functions vanishing outside $\Omega$, i.e., 
$$
H^s_0(\Omega)=\{u\in H^s(\R^n)\colon u=0 \mbox{ in } \R^n\setminus \Omega\}.
$$

From now on,  $\Omega\subset \R^n$ will be a Lipschitz domain.

\begin{defi} Given $A\subset \Omega$, for any $0<s<1$, we define the Gagliardo $s-$capacity of $A$ relative to $\Omega$ as
$$
\cp_s(A,\Omega)= \inf \left\{ [u]^2_s \colon u\in C^\infty_c(\Omega),\ u\ge 1 \mbox{ in a neighborhood of } A \right\}.
$$

We say that a subset $A$ of $\Omega$ is an {\em $s$-quasi open} subset of $\Omega$ if there exists a decreasing sequence $\{G_k\}_{k\in \N}$ of open sets such that $\lim_{k\to\infty}\cp_s(G_k,\Omega)=0$ and $A\cup G_k$ is an open set. 

We denote by $\A_s(\Omega)$ the class of all $s-$quasi open subsets of $\Omega$. 

In the case $s=1$ the definitions are completely analogous with $\|\nabla u\|_2^2$ instead of $[u]_s^2$.
\end{defi}

We say that a property $P(x)$ holds {\em $s$-quasi everywhere} on $E\subset \Omega$ ( $s$-q.e. on $E$), if $\cp_s(\{x\in E\colon P(x) \mbox{ does not hold} \}, \Omega)=0$.


A function $u\colon \R^n \to \R$ is said {\em $s$-quasi-continuous} if there exists a decreasing sequence $\{G_k\}_{k\in \N}$ of open sets such that $\lim_{k\to\infty}\cp_s(G_k, \Omega)= 0$ and $u|_{\R^n \setminus G_k}$ is continuous.

The following theorem allows us to work with $s$-quasi continuous functions instead of the classical fractional Sobolev ones. 

\begin{teo}[Theorem 3.7, \cite{Warma}]\label{representante}
For every function $u\in H^s_0(\Omega)$ there exist a unique $\tilde{u}\colon \R^n \to \R$ $s$-quasi-continuous function such that $u=\tilde{u}$ a.e. in $\R^n$.
\end{teo}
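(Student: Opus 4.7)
The plan is to follow the standard potential-theoretic argument that constructs the quasi-continuous representative via a subsequence that converges quasi-uniformly, the fractional analogue of the classical construction for $H^1$-Sobolev functions.

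The key preliminary step is a Chebyshev-type inequality for the Gagliardo $s$-capacity: for every $v\in C_c^\infty(\Omega)$ and every $\lambda>0$,
\begin{equation*}
\cp_s\bigl(\{|v|\ge \lambda\},\Omega\bigr)\;\le\; \lambda^{-2}\,[v]_s^{\,2}.
\end{equation*}
This follows by testing the definition of capacity with a slight variant of $v/\lambda$. Since $v$ is smooth, for any $\delta>0$ the set $\{|v|>\lambda-\delta\}$ is open and contains $\{|v|\ge\lambda\}$; composing $v$ with a smooth $1$-Lipschitz truncation $\psi_\lambda$ satisfying $\psi_\lambda\equiv 1$ on $|t|\ge\lambda$ and $\psi_\lambda\equiv 0$ on $|t|\le\lambda/2$ produces an admissible competitor $\psi_\lambda(v)/\lambda \in C_c^\infty(\Omega)$ (after a harmless mollification) with Gagliardo seminorm bounded by $\lambda^{-1}[v]_s$, which gives the inequality.

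For existence, I take a sequence $u_k\in C_c^\infty(\Omega)$ with $\|u_k-u\|_s\to 0$ and extract a subsequence (not relabelled) with $[u_{k+1}-u_k]_s^2\le 4^{-k}$. Setting
\begin{equation*}
E_k:=\{|u_{k+1}-u_k|>2^{-k}\},\qquad G_j:=\bigcup_{k\ge j} E_k,
\end{equation*}
the Chebyshev inequality gives $\cp_s(E_k,\Omega)\le 2^{2k}\cdot 4^{-k}=1$; the correct choice $[u_{k+1}-u_k]_s^2\le 8^{-k}$ yields $\cp_s(E_k,\Omega)\le 2^{-k}$ and hence $\cp_s(G_j,\Omega)\to 0$ by subadditivity (which is part of the appendix the author promises). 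On $\R^n\setminus G_j$ the sequence $\{u_k\}$ is uniformly Cauchy, so its limit $\tilde u$ is continuous there; defining $\tilde u$ as this limit on $\bigcup_j(\R^n\setminus G_j)$ and arbitrarily (say zero) on the remainder (which has zero $s$-capacity), one obtains an $s$-quasi-continuous function, and since $u_k\to u$ in $L^2(\R^n)$ we have $\tilde u=u$ a.e.

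For uniqueness, suppose $\tilde u_1,\tilde u_2$ are $s$-quasi-continuous with $\tilde u_1=\tilde u_2$ a.e. Fix $\varepsilon>0$ and choose open sets $G$ with $\cp_s(G,\Omega)<\varepsilon$ outside which both functions are continuous; then $v:=\tilde u_1-\tilde u_2$ is continuous on $\R^n\setminus G$ and vanishes a.e.\ there, so by continuity $v\equiv 0$ on $\R^n\setminus G$. Hence $\{\tilde u_1\ne \tilde u_2\}\subset G$, and letting $\varepsilon\to 0$ (using again subadditivity and the fact that sets of capacity zero are closed under countable unions) gives $\cp_s(\{\tilde u_1\ne \tilde u_2\},\Omega)=0$, i.e.\ equality $s$-q.e., which is the correct notion of uniqueness for quasi-continuous representatives.

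The main technical obstacle is the Chebyshev inequality for capacity, both because the definition restricts competitors to $C_c^\infty(\Omega)$ (forcing an additional mollification of the Lipschitz truncation $\psi_\lambda\circ v$) and because one must verify that composing with a $1$-Lipschitz map does not increase the Gagliardo seminorm — a pointwise estimate inside the double integral defining $[\,\cdot\,]_s^{\,2}$ that is specific to the nonlocal norm and has no completely parallel statement in the local $H^1$ case.
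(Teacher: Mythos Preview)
The paper does not prove this theorem; it is quoted verbatim as Theorem~3.7 of \cite{Warma} and used as a black box throughout, so there is no in-paper argument to compare against.

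Your construction is the standard potential-theoretic one and is correct in outline. Two small wrinkles are worth flagging. First, in the Chebyshev step a truncation rising from $0$ at $|t|=\lambda/2$ to $1$ at $|t|=\lambda$ is $(2/\lambda)$-Lipschitz, not $1$-Lipschitz; the admissible competitor is $\psi_\lambda(v)$ itself and you get the capacity bound with a harmless constant $4$ (and since $\psi_\lambda$ can be taken smooth with $\psi_\lambda(0)=0$, the composition is already in $C_c^\infty(\Omega)$ and no mollification is needed). Second, the uniqueness step is slightly too quick: continuity of $v=\tilde u_1-\tilde u_2$ on the closed set $\R^n\setminus G$ together with $v=0$ a.e.\ does not by itself force $v\equiv 0$ there, since $\R^n\setminus G$ may be Lebesgue-thin near some of its points. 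What is really being used is the lemma that an $s$-quasi-continuous function vanishing a.e.\ vanishes $s$-q.e.; the paper itself invokes exactly this fact (for $s=1$) via \cite[Lemma~3.3.30]{Henrot-Pierre} in the proof of Theorem~\ref{main}. Finally, your closing remark overstates the nonlocal novelty: the estimate $[\psi\circ u]_s\le[u]_s$ for $1$-Lipschitz $\psi$ has a direct local analogue $\|\nabla(\psi\circ u)\|_2\le\|\nabla u\|_2$ via the chain rule.
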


From this point, we identify a function $u\in H_0^s(\Omega)$ with its $s$-quasi continuous representative.

For $A \in \A_s(\Omega)$, we consider the fractional Sobolev space 
$$
H_0^s(A):= \{ u \in H_0^s(\Omega) \colon u = 0 \ \mbox{$s$-q.e. in } \R^n \setminus A \}.
$$

To go into detail about $s$-capacity we refer the reader, for instance, to \cite{Shi-Xiao,Warma}.

\medskip

\subsection{Statements}
Given $A\in \A_s(\Omega)$, we denote by $u^s_A\in H^s_0(A)$ the unique weak 
solution to
\begin{equation}\label{uas}
(-\Delta)^s u^s_A =1 \quad \mbox{ in } A, \qquad u^s_A=0 \quad \mbox{ in }  \R^n \setminus A.
\end{equation}
With this notation, we define the following notion of set convergence.

\begin{defi}[Strong $\gamma_s$-convergence]
Let $\{A_k\}_{k\in\N}\subset \A_s(\Omega)$ and $A\in \A_s(\Omega)$. We say that $A_k\stackrel{\gamma_s}{\to} A$ if $u_{A_k}^s\to u_A^s$ strongly in $L^2(\Omega)$. 

Let $m\in \N$, $\{(A_1^k,\dots,A_m^k)\}_{k\in \N} \subset \A_s(\Omega)^m$ and $(A_1,\dots,A_m) \in \A_s(\Omega)^m$. We say $(A_1^k,\dots, A_m^k)\stackrel{\gamma_s}{\to}(A_1,\dots,A_m)$ if $A_i^k \stackrel{\gamma_s}{\to} A_i $ for every $i=1, \dots, m$.
\end{defi}

\begin{defi}[Weak $\gamma_s$-convergence]
Let $\{A_k\}_{k\in\N}\subset \A_s(\Omega)$. We say that $A_k\stackrel{\gamma_s}{\cd} A$ if $u_{A_k}^s\to u$ strongly in $L^2(\Omega)$ and $\{u>0\}=A$.
	
Let $m\in\N$ and $\{(A_1^k,\dots,A_m^k)\}_{k\in \N} \subset \A_s(\Omega)^m$. We say $(A_1^k,\dots, A_m^k)\stackrel{\gamma_s}{\cd}(A_1,\dots,A_m)$ if $A_i^k \stackrel{\gamma_s}{\cd} A_i
$ for every $i=1, \dots, m$.
\end{defi}

\medskip 

Let $m\in \N$ be fixed and $0<s\le1$. Let $F_s \colon \A_s(\Omega)^m\to [0,\infty]$ be such that
\begin{itemize}
\item $F_s$ is weak $\gamma_s$-lower semicontinuous, that is, 
$$
F_s(A_1, \dots, A_m)\le \liminf_{k\to \infty} F_s(A_1^k, \dots, A_m^k),
$$
for every sequence $\{(A_1^k,\dots, A_m^k)\}_{k\in \N}$ such that $(A_1^k,\dots, A_m^k) \stackrel{\gamma_s}{\cd} (A_1,\dots, A_m)$.
\item $F_s$ is decreasing, that is, for every $(A_1,\dots,A_m), (B_1,\dots,B_m)  \in \A_s(\Omega)^m$ such that $A_i\subset B_i$ for $i=1,\dots, m$, we have
$$
F_s(A_1, \dots, A_m) \ge F_s(B_1, \dots, B_m).
$$ 
\end{itemize}	

Under these assumptions, we are able to recover the existence result of \cite{Bucur-Buttazzo-Henrot}, for the fractional case. Rigorously speaking, we have the following theorem. 

\begin{teo}\label{main.s}
Let $F_s \colon \A_s(\Omega)^m\to [0,\infty]$ be a decreasing and weak $\gamma_s$-lower semicontinuous functional. Then, there exists a solution to
\begin{equation}\label{min.s}
\min \left\{  F_s(A_1, \dots, A_m) \colon A_i \in \A_s(\Omega), \ \cp_s(A_i\cap A_j,\Omega )=0  \mbox{ for } i\neq j    \right\}.
\end{equation} 
\end{teo}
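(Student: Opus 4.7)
The plan is to apply the direct method in the weak $\gamma_s$-topology, emulating the strategy used in \cite{Bucur-Buttazzo-Henrot} for $s=1$. First, I would fix a minimizing sequence $\{(A_1^k,\dots,A_m^k)\}_{k\in\N}$ for \eqref{min.s} and consider the torsion functions $u_i^k:=u^s_{A_i^k}\in H^s_0(A_i^k)$ solving \eqref{uas}. Using the weak formulation of \eqref{uas} together with the comparison principle against $u^s_\Omega$, each $u_i^k$ satisfies $0\le u_i^k\le u^s_\Omega$ $s$-q.e., so
\begin{equation*}
[u_i^k]_s^2=\int_{A_i^k}u_i^k\,dx\le\|u^s_\Omega\|_{L^\infty(\Omega)}\,|\Omega|,
\end{equation*}
and the sequences $\{u_i^k\}_{k\in\N}$ are uniformly bounded in $H^s_0(\Omega)$. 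By Rellich compactness $H^s_0(\Omega)\hookrightarrow L^2(\Omega)$ and finitely many successive extractions in $i$, I pass to a subsequence (not relabeled) along which, for every $i=1,\dots,m$, $u_i^k\cd u_i$ weakly in $H^s_0(\Omega)$, strongly in $L^2(\Omega)$, and pointwise $s$-quasi everywhere on $\Omega$.

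Next, I identify the candidate minimizer. Since each $u_i$ is $s$-quasi-continuous, as the $s$-q.e.\ limit of $s$-quasi-continuous representatives, the set $A_i:=\{u_i>0\}$ is $s$-quasi open, so $(A_1,\dots,A_m)\in\A_s(\Omega)^m$. By the very definition of weak $\gamma_s$-convergence, $A_i^k\stackrel{\gamma_s}{\cd}A_i$ for each $i$. The crucial step is to show that the disjointness constraint survives in the limit. Because $\cp_s(A_i^k\cap A_j^k,\Omega)=0$ and $u_\ell^k=0$ $s$-q.e.\ outside $A_\ell^k$ for every $\ell$, we have $u_i^k\,u_j^k=0$ $s$-q.e.\ on $\Omega$ whenever $i\neq j$. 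The pointwise $s$-q.e.\ convergence then transfers this product relation to the limit: $u_i u_j=0$ $s$-q.e., whence $\cp_s(\{u_i>0\}\cap\{u_j>0\},\Omega)=0$ and $(A_1,\dots,A_m)$ is admissible for \eqref{min.s}. The weak $\gamma_s$-lower semicontinuity of $F_s$ finally yields
\begin{equation*}
F_s(A_1,\dots,A_m)\le\liminf_{k\to\infty}F_s(A_1^k,\dots,A_m^k)=\inf\eqref{min.s},
\end{equation*}
so $(A_1,\dots,A_m)$ realizes the minimum.

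The main obstacle is ensuring that the disjointness passes to the limit in the $s$-capacity sense rather than only in the Lebesgue sense: a.e.\ convergence of $u_i^k$ would merely give $|\{u_i>0\}\cap\{u_j>0\}|=0$, which is insufficient because $s$-quasi open sets of measure zero may carry positive $s$-capacity. The remedy is to work systematically with $s$-quasi-continuous representatives and to extract an $s$-q.e.\ convergent subsequence, a property of weakly $H^s_0$-convergent sequences that is a standard consequence of the fractional capacity theory to be collected in the Appendix. The monotonicity hypothesis on $F_s$ plays no role in this argument once the canonical representative $A_i=\{u_i>0\}$ is adopted; it is assumed only to broaden the class of functionals, in particular fractional eigenvalue functionals, to which the result applies.
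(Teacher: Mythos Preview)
Your argument is correct and follows essentially the same route as the paper's proof: extract a weak $\gamma_s$-convergent subsequence via the uniform $H^s_0$-bound on the torsion functions, pass the disjointness constraint to the limit by upgrading $L^2$-convergence to $s$-q.e.\ convergence along a further subsequence (the paper invokes \cite[Lemma~3.8]{Warma} for this), and conclude by weak $\gamma_s$-lower semicontinuity. One small imprecision: the $s$-quasi continuity of the limit $u_i$ does not follow from being an $s$-q.e.\ limit of $s$-quasi continuous functions, but rather from $u_i\in H^s_0(\Omega)$ together with Theorem~\ref{representante}; your observation that the monotonicity of $F_s$ is not used in this particular proof is also accurate.
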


The proof of Theorem \ref{main.s} is carried out in Section \ref{sec.teo1} and we use ideas from \cite{Bucur-Buttazzo-Henrot} and \cite{BRS}.

\medskip

Once we know the existence of an optimal partition shape for each $0<s<1$, we want to analyze the limit of these minimizers and its minimum values when $s\uparrow 1$. To this aim, we need a suitable relationship between the cost functionals $F_s, 0<s\le 1$ and a notion of set convergence.

Let us start with the notion of set convergence. For $A\in \A_1(\Omega)$, we introduce the analogous notation $u_A^1 \in H_0^1(A)$ for the unique weak solution to 
$$
-\Delta u_A^1 =1 \mbox{ in } A, \quad u_A^1=0 \mbox{ in } \R^n \setminus A.
$$

\begin{defi}[$\gamma$-convergence] Let $0<s_k\uparrow 1$, $\{A_k\}_{k\in\N}\subset \A_{s_k}(\Omega)$ and $A\in \A_1(\Omega)$. We say that $A_k\stackrel{\gamma}{\to} A$ if $u_{A_k}^{s_k}\to u_A^1$ strongly in $L^2(\Omega)$. 
	
Let $m \in \N$, $(A_1^k,\dots,A_m^k)\in \A_{s_k}(\Omega)^m$ and $(A_1,\dots,A_m)\in \A_1(\Omega)^m$. We say that $(A_1^k,\dots,A_m^k) \stackrel{\gamma}{\to}(A_1,\dots,A_m)$ if $u_{A_i^k}^{s_k} \to u_{A_i}^1$ strongly in $L^2(\Omega)$, for every $i=1,\dots, m$. 
\end{defi}

%

Let $m\in \N$ and $0<s\le1$. Let $F_s\colon \A_s(\Omega)^m\to [0,\infty]$ be decreasing and weak $\gamma_s$-lower semicontinuous functionals. Then, there exists $(A_1^s,\dots, A_m^s)$ solution to 
\begin{equation}\label{m.s}
m_s:= \min \left\{ F_s(B_1,\dots,B_m)\colon B_i \in \A_s(\Omega), \, \cp_s(B_i\cap B_j, \Omega)=0 \mbox{ for } i\neq j \right\}.
\end{equation}	

The case $s=1$ was solved in \cite{Bucur-Buttazzo-Henrot}. For $0<s<1$, apply Theorem \ref{main.s}.

Assume the following hypotheses over the cost functionals:
\begin{itemize}
\item[$(H_1)$] Continuity. For every $(A_1,\dots, A_m)\in \A_1(\Omega)^m$, 
$$
F_1(A_1,\dots, A_m)=\lim_{s\uparrow 1}F_s(A_1,\dots, A_m).
$$
\item[$(H_2)$] Liminf inequality. For every $0<s_k\uparrow 1$,  $(A_1^k,\dots, A_m^k)\in \A_{s_k}(\Omega)^m$ and $(A_1,\dots, A_m)\in \A_1(\Omega)^m$ such that $(A_1^k,\dots,A_m^k) \stackrel{\gamma}{\to}(A_1,\dots,A_m)$,
$$
F_1(A_1,\dots,A_m) \le \liminf_{k\to \infty}{F_{s_k}(A_1^k,\dots,A_m^k)}.
$$
\end{itemize}

These conditions $(H_1)-(H_2)$ are natural and analogous to those consider in \cite{BRS}, where a similar shape optimazation problem was studied with $m=1$.

Now, we are able to establish the main result. 

\begin{teo}\label{main}
Let $m\in \N$ be fixed and $0<s\le 1$. Let $F_s\colon \A_{s}(\Omega)^m\to [0,\infty]$ be a decreasing and weak $\gamma_{s}$-lower semicontinuous functional, and such that $(H_1)-(H_2)$ are verified. Then, 
\begin{equation}\label{limite}
m_1 = \lim_{s\uparrow 1} m_s,
\end{equation}
where $m_s$ is defined in \eqref{m.s}.
	
Moreover, if $(A_1^s,\dots,A_m^s)$ is a minimizer of \eqref{m.s}, then, there exist a subsequence $0<s_k\uparrow 1$, $(\tilde{A}_1^{s_k},\dots,\tilde{A}_m^{s_k}) \in \A_{s_k}(\Omega)^m$  and $(A_1^1,\dots,A_m^1) \in \A_1(\Omega)^m$ such that $\tilde{A}_i^{s_k} \supset A_i^{s_k}$ and 
$$
(\tilde{A}_1^{s_k},\dots,\tilde{A}_m^{s_k}) \stackrel{\gamma}{\to}  (A_1^1,\dots,A_m^1) ,
$$
where $(A_1^1,\dots,A_m^1) $ is a minimizer of \eqref{m.s} with $s=1$.
\end{teo}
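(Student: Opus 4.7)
The plan is to establish \eqref{limite} via the two inequalities $\limsup_{s\uparrow 1} m_s \le m_1$ and $\liminf_{s\uparrow 1} m_s \ge m_1$; the lower-bound argument will simultaneously produce the convergent subsequence of minimizers and identify its limit as an optimizer of the $s=1$ problem. The upper bound will rely on $(H_1)$ applied to a fixed $s=1$-minimizer, while the lower bound will combine $(H_2)$ with a compactness argument for the torsion functions $u_{A_i^s}^s$.

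For the upper bound, I would fix a minimizer $(A_1^1,\dots,A_m^1)$ of the $s=1$ problem, so $A_i^1\in\A_1(\Omega)$ and $\cp_1(A_i^1\cap A_j^1,\Omega)=0$ for $i\ne j$. Using the capacity-comparison results in the Appendix (small $1$-capacity implying small $s$-capacity uniformly for $s$ close to $1$), each $A_i^1$ belongs to $\A_s(\Omega)$ and $\cp_s(A_i^1\cap A_j^1,\Omega)=0$, so $(A_1^1,\dots,A_m^1)$ is admissible in \eqref{m.s}. Therefore $m_s\le F_s(A_1^1,\dots,A_m^1)\to F_1(A_1^1,\dots,A_m^1)=m_1$ by $(H_1)$, giving $\limsup_{s\uparrow 1} m_s\le m_1$.

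For the lower bound, let $(A_1^s,\dots,A_m^s)$ minimize \eqref{m.s} and set $u_i^s:=u_{A_i^s}^s$. Testing \eqref{uas} against $u_i^s$ yields $\tfrac{c(n,s)}{2}[u_i^s]_s^2=\int_\Omega u_i^s$, and by the maximum principle $0\le u_i^s\le C(\Omega)$ uniformly in $s$, so $(1-s)[u_i^s]_s^2$ is uniformly bounded via the known asymptotics of $c(n,s)$. A Bourgain--Brezis--Mironescu-type compactness then produces a subsequence $s_k\uparrow 1$ and limits $u_i\in H^1_0(\Omega)$ with $u_i^{s_k}\to u_i$ strongly in $L^2(\Omega)$. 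I would define $A_i^1:=\{u_i>0\}\in\A_1(\Omega)$ through the quasi-continuous representative; the $\cp_{s_k}$-disjointness of the $A_i^{s_k}$ should then pass to $\cp_1$-disjointness of the $A_i^1$ in the limit.

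The main technical obstacle is constructing the enlargements $\tilde{A}_i^{s_k}\supset A_i^{s_k}$, still in $\A_{s_k}(\Omega)$, with $u_{\tilde{A}_i^{s_k}}^{s_k}\to u_{A_i^1}^1$ strongly in $L^2(\Omega)$, because a priori the $L^2$-limit $u_i$ of $u_i^{s_k}$ need not coincide with the $1$-torsion function $u_{A_i^1}^1$ of $A_i^1$, and only a domain enlargement can raise the torsion function enough to close this gap. My candidate is $\tilde{A}_i^{s_k}:=A_i^{s_k}\cup A_i^1$, which is $s_k$-quasi open via open approximants of both pieces; domain-monotonicity of the torsion function combined with a $\gamma_s$-continuity-in-$s$ statement for the fixed set $A_i^1$ should then upgrade this to the desired $\gamma$-convergence. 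Monotonicity of $F_{s_k}$ gives $F_{s_k}(\tilde{A}_1^{s_k},\dots,\tilde{A}_m^{s_k})\le m_{s_k}$, and $(H_2)$ yields $F_1(A_1^1,\dots,A_m^1)\le\liminf_k m_{s_k}$. Combined with the upper bound this proves $m_s\to m_1$ and that $(A_1^1,\dots,A_m^1)$ attains $m_1$.
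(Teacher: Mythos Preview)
Your overall strategy coincides with the paper's: the upper bound $\limsup_{s\uparrow1} m_s\le m_1$ via $(H_1)$ applied to a fixed $s=1$ admissible configuration (the paper uses an arbitrary competitor, you use a minimizer; either works, and both rely on the Appendix to transfer $\cp_1$-admissibility to $\cp_s$-admissibility), and the lower bound via $L^2$-compactness of the torsion functions $u_{A_i^{s_k}}^{s_k}$, identification of $A_i^1=\{u_i>0\}$, passage of disjointness to the limit, and $(H_2)$ along an enlarged sequence combined with the monotonicity of $F_{s_k}$. The paper packages the compactness step by citing Lemma~\ref{Ksacotado} from \cite{BRS}, which is exactly the BBM-type bound you sketch; for the disjointness it argues $u_i^{k}u_j^{k}=0$ a.e.\ $\Rightarrow u_iu_j=0$ a.e.\ $\Rightarrow u_iu_j=0$ $1$-q.e., which is the content of your one-line ``should then pass''.

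The one place where your argument has a genuine gap is the enlargement step. The paper does not build $\tilde{A}_i^{s_k}$ by hand here; it invokes Proposition~\ref{suplente} (i.e.\ \cite[Proposition~4.5]{BRS}), whose proof uses level sets $\{u_{A_i^1}^{s_k}>\varepsilon\}$ and a diagonal argument, in the spirit of Proposition~\ref{keyprop}. Your candidate $\tilde{A}_i^{s_k}=A_i^{s_k}\cup A_i^1$ is natural, but the two ingredients you name (domain monotonicity of the torsion function and continuity in $s$ of $u_{A_i^1}^s$) only produce the \emph{lower} bound $\liminf_k u_{\tilde{A}_i^{s_k}}^{s_k}\ge u_{A_i^1}^1$. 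They do not rule out that the $L^2$-limit $v\in\K_1$ of $u_{A_i^{s_k}\cup A_i^1}^{s_k}$ satisfies $\{v>0\}\supsetneq A_i^1$, which is exactly what you must exclude to get $v\le u_{A_i^1}^1$ from Proposition~\ref{uAmax}. This upper bound is where the $\varepsilon$-level-set device and a lemma of the type \cite[Lemma~3.6]{BRS} enter in the fixed-$s$ analogue (Proposition~\ref{keyprop}), and the varying-$s$ version needs the same mechanism. So either replace your candidate by the level-set construction and run the diagonal argument, or cite the black-box Proposition~\ref{suplente} as the paper does.
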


The proof of Theorem \ref{main} is carried out in Section \ref{sec.teo2} and we use again ideas from \cite{BRS}.

\subsection{Examples}
Given $A\in \A_s(\Omega)$, consider the problem
\begin{equation} \label{eq}
(-\Delta)^s u= \lambda^s u \quad \textrm{ in } A, \qquad u\in H_0^s(A)
\end{equation}
where $\lam^s\in \R$ is the eigenvalue parameter. It is well-known that there exists a discrete sequence $\{\lam_k^s(A)\}_{k\in\N}$ of positive eigenvalues of \eqref{eq} approaching $+\infty$ whose corresponding eigenfunctions $\{u_k^s\}_{k\in\N}$ form an orthogonal basis in $L^2(A)$. Moreover, the following variational characterization holds for the eigenvalues
\begin{equation} \label{variac}
\lam_k^s(A)=\min_{u\perp W_{k-1} }\frac{c(n,s)}{2}\frac{[u]^2_s}{\|u\|_2^2},
\end{equation}
where $W_k$ is the space spanned by the first $k$ eigenfunctions $u_1^s,\ldots, u_k^s$.

Consider functionals $F_s(A_1,\dots,A_m)= \Phi_s(\lam_{k_1}^s(A_1),\dots, \lam_{k_m}^s(A_m))$. Theorem \ref{main.s} claims that for every $(k_1,\dots,k_m)\in\N^m$, the minimum
$$
\min\{ \Phi_s(\lam_{k_1}^s(A_1),\dots, \lam_{k_m}^s(A_m))\colon A_i\in \A_s(\Omega), \, \cp_s(A_i\cap A_j,\Omega) \mbox{ for } i\neq j\}
$$
is achieved, where $\Phi_s\colon  \R^m\to \bar \R$, is increasing in each coordinate and lower semicontinuous.

Moreover, if $\Phi_s(t_1,\dots,t_m)\to \Phi_1(t_1,\dots,t_m)$ for every $(t_1,\dots,t_m)\in\R^m$ and
$$
\Phi_1(t_1,\dots,t_m)\le \liminf_{k\to\infty} \Phi_{s_k}(t_1^k,\dots,t_m^k),
$$
for every $(t_1^k,\dots,t_m^k)\to (t_1,\dots,t_m)$, then Theorem \ref{main} together with the existence result of \cite{Bucur-Buttazzo-Henrot} imply that 
\begin{align*}
&\min\{ \Phi_1(\lam_{k_1}(A_1),\dots,\lam_{k_m}(A_m))\colon A_i\in \A_1(\Omega), \, \cp_1(A_i\cap A_j,\Omega)=0 \mbox{ for } i\neq j\}\\
& = \lim_{s\uparrow 1} \min\{ \Phi_s(\lam_{k_1}^s(A_1),\dots,\lam_{k_m}^s(A_m))\colon A_i\in \A_s(\Omega), \, \cp_s(A_i\cap A_j,\Omega)=0 \mbox{ for }i\neq j\}.
\end{align*}

\section{Proof of Theorem \ref{main.s}} \label{sec.teo1}

In this section, we adapted the ideas from \cite{Bucur-Buttazzo-Henrot}, where the authors consider the Laplacian operator, to recover their results for the fractional case. Despite the similarity of the proofs, we include them for the reader's convenience and recalling that in the context of this article we need the nonlocal tools proved in the recent work \cite{BRS}.

\subsection{Certain compactness on $\A_s(\Omega)$}

Consider $\K_s$ given by
\begin{equation}\label{Ks}
\K_s:=\{w\in H_0^s(\Omega) \colon w\ge 0, \, (-\Delta)^s w \le 1 \mbox{ in } \Omega \}.
\end{equation}

\begin{prop}[Proposition 3.3 and Lemma 3.5, \cite{BRS}]\label{propKs}
$\K_s$ is convex, closed and bounded in $H_0^s(\Omega)$. Moreover, if $u, v\in \K_s$, then, $\max\{u,v\} \in \K_s$.
\end{prop}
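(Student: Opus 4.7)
The statement is essentially a compilation of standard facts about the convex set of weak subsolutions to $(-\Delta)^s w = 1$, plus the nonlocal analogue of the classical principle that the maximum of two subsolutions is again a subsolution. Since all four properties concern the closed convex cone $\mathcal{K}_s$, I would handle them in increasing order of difficulty.

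\textbf{Convexity.} This is immediate: if $u,v\in\mathcal{K}_s$ and $t\in[0,1]$, then $tu+(1-t)v\ge 0$ is preserved, and the linearity of $(-\Delta)^s$ together with $t\cdot 1+(1-t)\cdot 1=1$ gives the distributional inequality. No technicalities here.

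\textbf{Boundedness.} The element $w\in\mathcal{K}_s$ is itself a non-negative test function in $H_0^s(\Omega)$, so testing $(-\Delta)^sw\le 1$ against $w$ yields
$$
\frac{c(n,s)}{2}[w]_s^2=\langle(-\Delta)^sw,w\rangle\le\int_\Omega w\,dx\le|\Omega|^{1/2}\|w\|_2.
$$
Applying the fractional Poincar\'e inequality in $H_0^s(\Omega)$ (valid since $\Omega$ is bounded) to replace $\|w\|_2$ by $C[w]_s$ then gives a uniform bound on $[w]_s$, and hence on $\|w\|_s$.

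\textbf{Closedness.} Take $u_k\in\mathcal{K}_s$ with $u_k\to u$ in $H_0^s(\Omega)$. Passing to a subsequence, $u_k\to u$ a.e., which preserves $u\ge 0$. For the inequality, I would test against an arbitrary non-negative $\varphi\in C_c^\infty(\Omega)$: the bilinear form
$$
\mathcal{E}_s(u_k,\varphi)=\frac{c(n,s)}{2}\iint_{\R^n\times\R^n}\frac{(u_k(x)-u_k(y))(\varphi(x)-\varphi(y))}{|x-y|^{n+2s}}\,dx\,dy
$$
is continuous in the first argument with respect to $H^s$-convergence, so the uniform bound $\mathcal{E}_s(u_k,\varphi)\le\int_\Omega\varphi$ passes to the limit.

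\textbf{Stability under maximum.} This is the step I expect to be the main obstacle, because the nonlocality of $(-\Delta)^s$ obstructs the standard pointwise argument available in the local case. The plan is to establish the inequality
$$
\mathcal{E}_s(\max\{u,v\},\varphi)+\mathcal{E}_s(\min\{u,v\},\varphi)=\mathcal{E}_s(u,\varphi)+\mathcal{E}_s(v,\varphi)
$$
for every non-negative $\varphi\in H_0^s(\Omega)$, which follows from the pointwise identity
$$
(\max\{u,v\}(x)-\max\{u,v\}(y))+(\min\{u,v\}(x)-\min\{u,v\}(y))=(u(x)-u(y))+(v(x)-v(y))
$$
multiplied by $(\varphi(x)-\varphi(y))$ and integrated. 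Combining this with the Kato-type inequality $\mathcal{E}_s(\min\{u,v\},\varphi)\ge\mathcal{E}_s(u,\varphi)\wedge\mathcal{E}_s(v,\varphi)$-style bound (or, more directly, the fact that $\min\{u,v\}\in H_0^s(\Omega)$ is a valid non-negative test function so that $\mathcal{E}_s(u,\min\{u,v\}\varphi)$-type manipulations yield the desired sign) produces
$$
\mathcal{E}_s(\max\{u,v\},\varphi)\le\mathcal{E}_s(u,\varphi)+\mathcal{E}_s(v,\varphi)-\mathcal{E}_s(\min\{u,v\},\varphi)\le\int_\Omega\varphi\,dx,
$$
which is exactly $(-\Delta)^s\max\{u,v\}\le 1$. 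Non-negativity of $\max\{u,v\}$ is trivial. Since the statement is cited verbatim from \cite{BRS}, I would ultimately invoke the cited proof there; the sketch above indicates why it is true and where the subtlety lies.
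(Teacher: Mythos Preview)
The paper gives no proof of this proposition at all: it is simply quoted from \cite{BRS}. Your decision to ``ultimately invoke the cited proof there'' therefore matches the paper exactly, and the sketches you add for convexity, boundedness and closedness are correct and standard.

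One remark on the part you flag as the obstacle. Your identity
\[
\mathcal{E}_s(\max\{u,v\},\varphi)+\mathcal{E}_s(\min\{u,v\},\varphi)=\mathcal{E}_s(u,\varphi)+\mathcal{E}_s(v,\varphi)
\]
is correct, but the chain you write after it does not close as stated: to conclude $\mathcal{E}_s(\max\{u,v\},\varphi)\le\int_\Omega\varphi$ from the identity you would need $\mathcal{E}_s(\min\{u,v\},\varphi)\ge \mathcal{E}_s(u,\varphi)+\mathcal{E}_s(v,\varphi)-\int_\Omega\varphi$, and neither the ``$\ge\min$'' bound you gesture at nor the remark that $\min\{u,v\}$ is an admissible non-negative test function delivers this. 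What actually makes the argument go through is a genuine fractional Kato inequality of the form $(-\Delta)^s(u-v)^+\le 1_{\{u>v\}}\bigl((-\Delta)^s u-(-\Delta)^s v\bigr)$ in the distributional sense, after which $M=v+(u-v)^+$ gives $(-\Delta)^s M\le 1$ directly; this is the nontrivial input hidden behind your phrase ``Kato-type''. Since you explicitly defer to \cite{BRS} for the rigorous version, this is not a defect in your proposal, just a point where the sketch would need real work if expanded.
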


\begin{prop}[Lemma 3.2, \cite{BRS}]\label{uAmax}
Given $A\in \A_s(\Omega)$, $u_A^s$ is  the solution to 
$$
\max \left\{  w\in H_0^s(\Omega) \colon w\le 0 \mbox{ in } \R^n \setminus A, \, (-\Delta)^s w \le 1 \mbox{ in } \Omega   \right\}.
$$

Moreover, $u_A^s \in \K_s$, for every $A\in \A_s(\Omega)$.
\end{prop}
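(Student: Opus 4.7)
My plan is to establish two claims: (i) $u_A^s$ belongs to the admissible set of the max problem (which automatically yields $u_A^s\in\K_s$, since $u_A^s=0$ on $\R^n\setminus A$ and $u_A^s\ge 0$); and (ii) $u_A^s$ pointwise dominates any competitor $w$ feasible for the max problem. Throughout I would use the bilinear form
$$
\E_s(u,v) := \frac{c(n,s)}{2}\iint_{\R^n\times\R^n} \frac{(u(x)-u(y))(v(x)-v(y))}{|x-y|^{n+2s}}\,dxdy,
$$
in terms of which the weak formulation of \eqref{uas} reads $\E_s(u_A^s,\psi)=\int_\Omega\psi\,dx$ for every $\psi\in H_0^s(A)$.

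For (i), non-negativity $u_A^s\ge 0$ is obtained by testing \eqref{uas} against $(u_A^s)^-\in H_0^s(A)$ and invoking the Stampacchia-type inequality $\E_s(u,u^-)\le -\tfrac{c(n,s)}{2}[u^-]_s^2$, which forces $(u_A^s)^-\equiv 0$. The delicate point is the global weak bound $(-\Delta)^su_A^s\le 1$ on $\Omega$, since the equation is a priori known only on $A$. To handle it, given $\varphi\in H_0^s(\Omega)$ with $\varphi\ge 0$, I would let $\bar\varphi\in H_0^s(A)$ be the $\E_s$-orthogonal projection of $\varphi$ onto the closed subspace $H_0^s(A)$. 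Then $\E_s(u_A^s,\varphi)=\E_s(u_A^s,\bar\varphi)=\int\bar\varphi\,dx$, while $\varphi-\bar\varphi$ is weakly $s$-harmonic on $A$ and equals $\varphi\ge 0$ on $\R^n\setminus A$; the weak maximum principle then yields $\bar\varphi\le\varphi$ pointwise, and integrating gives the desired $\E_s(u_A^s,\varphi)\le\int\varphi\,dx$.

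For (ii), fix an admissible $w$ and set $v:=u_A^s-w$. Since $u_A^s=0$ and $w\le 0$ on $\R^n\setminus A$, we have $v\ge 0$ there, so $v^-\in H_0^s(A)$. Subtracting the identity $\E_s(u_A^s,v^-)=\int v^-$ from the admissibility inequality $\E_s(w,v^-)\le\int v^-$ yields $\E_s(v,v^-)\ge 0$. Splitting $v=v^+-v^-$ and using the pointwise sign relation $(v^+(x)-v^+(y))(v^-(x)-v^-(y))\le 0$ gives $\E_s(v^+,v^-)\le 0$, whence
$$
0\le \E_s(v,v^-)=\E_s(v^+,v^-)-\tfrac{c(n,s)}{2}[v^-]_s^2\le -\tfrac{c(n,s)}{2}[v^-]_s^2.
$$
Consequently $[v^-]_s=0$, so $v^-\equiv 0$ and $w\le u_A^s$ everywhere, completing the argument.

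The main obstacle will be the decomposition step in (i), which relies on $H_0^s(A)$ being a closed subspace of $H_0^s(\Omega)$ (so that the $\E_s$-projection exists) together with a weak maximum principle for the fractional Laplacian on $s$-quasi open sets with non-negative exterior data. Both are standard in the non-local setting but must be extracted carefully from the $s$-quasi-continuous framework set up in Section \ref{sec.preli}.
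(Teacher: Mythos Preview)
The paper does not give its own proof of this proposition; it is quoted verbatim as Lemma~3.2 of the reference \cite{BRS}, so there is no in-paper argument to compare against. Your proof is correct and follows what is the standard route for this type of statement: the projection-plus-maximum-principle decomposition in (i) to extend the equation on $A$ to the inequality $(-\Delta)^s u_A^s\le 1$ on all of $\Omega$, and the test-by-negative-part comparison in (ii). The two auxiliary facts you flag---that $H_0^s(A)$ is a closed subspace of $H_0^s(\Omega)$ (hence the $\E_s$-projection exists) and that the weak maximum principle holds for $(-\Delta)^s$ on $s$-quasi open sets---are indeed available from the $s$-quasi-continuous representative framework of Section~\ref{sec.preli}: closedness follows because $H^s$-convergent sequences have subsequences converging $s$-q.e.\ (\cite[Lemma 3.8]{Warma}), and the maximum principle is exactly the same negative-part test you use in (ii).
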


From now on, we undertand the identity $A=\{u_A^s>0\}$ in the sense of the Gagliardo $s$-capacity, thanks to Proposition \ref{propA}.

\begin{remark}\label{wcompacidaddominios}
The weak $\gamma_s$-convergence is sequentially
pre-compact in $\A_s(\Omega)$. Indeed, given a sequence $\{A_k\}_{k\in \N} \subset \A_s(\Omega)$, we know that $\{u_{A_k}^s\}_{k\in \N} \subset \K_s$. By Proposition \ref{propKs}, there exist a subsequence $\{u_{A_{k_j}}^s\}_{j\in \N} \subset \{u_{A_k}^s\}_{k\in \N}$ and a function $u\in \K_s$ such that $u_{A_{k_j}}^s \to u$ strongly in $L^2(\Omega)$. Denote by $A:=\{u>0\}$. Then, $A_{k_j} \stackrel{\gamma_s}{\cd} A$.
\end{remark}

Next proposition allows us to pass from the weak $\gamma_s$-convergence to the strong one, if we are willing to \textit{enlarge} the sequence involved. 
\begin{prop}\label{keyprop}
Let $\{A_k\}_{k\in\N}\subset \A_s(\Omega)$ and $A,B\in \A_s(\Omega)$ be such that  $A_k\stackrel{\gamma_s}{\cd} A\subset B$. 

Then, there exists a subsequence $\{A_{k_j}\}_{j\in\N}\subset \{A_k\}_{k\in \N}$ and a sequence $\{B_{k_j}\}_{j\in \N} \subset \A_s(\Omega)$ such that $A_{k_j}\subset B_{k_j}$ and $B_{k_j}\stackrel{\gamma_s}{\to} B$.
\end{prop}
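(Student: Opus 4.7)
The natural candidate is $B_{k_j}:=A_{k_j}\cup B$, interpreted as the $s$-quasi-open set associated with $\{u_{A_{k_j}}^s\vee u_B^s>0\}$; the inclusion $A_{k_j}\subset B_{k_j}$ is then automatic, and the task reduces to proving that $u_{B_{k_j}}^s\to u_B^s$ strongly in $L^2(\Omega)$ along a subsequence. The plan is to extract successive subsequences, exploit the pre-compactness inside $\K_s$, and identify the limit via the maximal characterization of the torsion function in Proposition \ref{uAmax}.

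Using the weak $\gamma_s$-precompactness of $\K_s$ (Proposition \ref{propKs} and Remark \ref{wcompacidaddominios}), I first extract a subsequence along which $u_{A_{k_j}}^s\to u$ strongly in $L^2(\Omega)$, weakly in $H_0^s(\Omega)$ and $s$-quasi everywhere, where $u\in\K_s$ and $\{u>0\}=A$. Since $A\subset B$, the function $u$ vanishes $s$-q.e.\ on $\R^n\setminus B$, so Proposition \ref{uAmax} gives $u\le u_B^s$. A further extraction yields $u_{B_{k_j}}^s\to \tilde w$ in $L^2(\Omega)$ with $\tilde w\in\K_s$. Proposition \ref{propKs} also ensures $u_{A_{k_j}}^s\vee u_B^s\in\K_s$, and since this function vanishes $s$-q.e.\ outside $B_{k_j}$, Proposition \ref{uAmax} yields $u_{A_{k_j}}^s\vee u_B^s\le u_{B_{k_j}}^s$; letting $j\to\infty$ gives $\tilde w\ge u\vee u_B^s=u_B^s$. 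The reverse inequality $\tilde w\le u_B^s$ then follows, again via Proposition \ref{uAmax}, as soon as one shows $\tilde w\in H_0^s(B)$, i.e.\ $\tilde w=0$ $s$-q.e.\ on $\R^n\setminus B$.

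This last property is the main obstacle, and it is here that the choice of the subsequence is used in an essential way. The idea is to upgrade the $s$-quasi-everywhere vanishing $u_{A_{k_j}}^s\to 0$ on $\R^n\setminus B$ (provided by the quasi-continuous representatives of Theorem \ref{representante} together with $A\subset B$) to the capacitary estimate $\cp_s(A_{k_j}\setminus B,\Omega)\to 0$ along a further subsequence; the fractional Choquet-type capacity tools gathered in the Appendix are precisely tailored for this step. Once $\cp_s(B_{k_j}\triangle B,\Omega)=\cp_s(A_{k_j}\setminus B,\Omega)\to 0$ is established, the Mosco convergence $H_0^s(B_{k_j})\to H_0^s(B)$ follows in a standard way, hence $u_{B_{k_j}}^s\to u_B^s$ strongly in $H_0^s(\Omega)$, which is more than enough to conclude. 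The overall scheme parallels the classical argument of \cite{Bucur-Buttazzo-Henrot} and its fractional adaptation in \cite{BRS}.
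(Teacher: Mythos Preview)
Your argument collapses at the step where you assert that one can ``upgrade'' the $s$-q.e.\ convergence $u_{A_{k_j}}^s\to 0$ on $\R^n\setminus B$ to the capacitary estimate $\cp_s(A_{k_j}\setminus B,\Omega)\to 0$. This is false in general, and nothing in the Appendix yields it: Lemma~\ref{lebesguecap} gives $|E|\le C\,\cp_s(E,\Omega)$, which is the wrong direction. A concrete obstruction is the following. Take $B=B_1(0)$ and let $A_k$ be a disjoint union of balls of radius $\varepsilon_k/4$ centred at the points of $\varepsilon_k\mathbb{Z}^n$ lying in the annulus $\{1<|x|<2\}$, with $\varepsilon_k\to 0$. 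Then $\|u_{A_k}^s\|_{L^\infty}\to 0$, so $A_k\stackrel{\gamma_s}{\cd}\emptyset\subset B$, yet $A_k\setminus B=A_k$ has Lebesgue measure bounded below, hence $\cp_s(A_k\setminus B,\Omega)\ge c>0$ by Lemma~\ref{lebesguecap}. Your Mosco-convergence route therefore cannot be completed as written, and the inequality $\tilde w\le u_B^s$ is left unproved.

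The paper avoids this difficulty by \emph{not} taking $B_{k_j}=A_{k_j}\cup B$. Instead it sets $B^{\varepsilon}=\{u_B^s>\varepsilon\}$ and works with $A_{k_j}\cup B^{\varepsilon}$. The upper bound $u^{\varepsilon}:=\lim_j u_{A_{k_j}\cup B^{\varepsilon}}^s\le u_B^s$ is obtained directly from \cite[Lemma~3.6]{BRS}, using only $u_{A_{k_j}}^s\to u\le u_B^s$; no capacitary smallness of $A_{k_j}\setminus B$ is needed. The lower bound comes from $(u_B^s-\varepsilon)^+\le u_{B^{\varepsilon}}^s\le u_{A_{k_j}\cup B^{\varepsilon}}^s$, which squeezes $u^{\varepsilon}$ between $(u_B^s-\varepsilon)^+$ and $u_B^s$. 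Letting $\varepsilon\downarrow 0$ and running a diagonal argument in $(j,\varepsilon)$ produces the enlarged sequence $B_{k_j}=A_{k_j}\cup B^{\varepsilon_j}$ with $B_{k_j}\stackrel{\gamma_s}{\to}B$. The two-parameter detour through $B^{\varepsilon}$ is precisely what replaces the unavailable capacity estimate.
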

\begin{proof} Since $A_k\stackrel{\gamma_s}{\cd} A\subset B$, we know that $u_{A_k}^s\to u$ strongly in $L^2(\Omega)$, where $\{ u>0 \}=A$. As a consequence of Propisition \ref{propKs}, $u\in \K_s$. Moreover, by Proposition \ref{uAmax}, $u\le u_A^s$. Since $A\subset B$, $u_A^s\le u_B^s$. Then, $u\le u_B^s$.
	
Denote by $B^{\ve}=\{ u_B^s>\ve \}$ and consider $\{u_{A_{k}\cup B^{\ve}}^s\}_{k\in \N} \subset  \K_s$. Again by Proposition \ref{propKs}, there exists a subsequence $\{A_{k_j}\}_{j\in \N} \subset \{A_k\}_{k\in \N}$ such that $u_{A_{k_j}\cup B^{\ve}}^s \to u^{\ve}$ strongly in $L^2(\Omega)$.

Due to the convergence $u_{A_{k_j}}^s\to u$ strongly in $L^2(\Omega)$ and $u\le u_B^s$, we conclude from \cite[Lemma 3.6]{BRS}, $u^{\ve}\le u_B^s$.
	
Inside the proof of \cite[Lemma 3.7]{BRS}, it was shown that $(u_{B}^s-\ve)^+\le u_{B^{\ve}}^s$. Since $B^{\ve}\subset A_{k_j} \cup B^{\ve}$, it follows that  $u_{B^{\ve}}^s \le u_{A_{k_j} \cup B^{\ve}}^s$. So, taking the limit $j\to\infty$, we obtain
$$
(u_{B}^s-\ve)^+\le u_{B^{\ve}}^s\le u^{\ve}\le u_B^s.
$$

The sequence $\{u^{\ve}\}_{\ve>0}$ is contained in $\K_s$. So, by Proposition \ref{propKs}, up to a subsequence, we know it has a weak limit in $H_0^s(\Omega)$. But, the previous inequality tells that this weak limit should be $u_B^s$. In addition, $u^{\ve}\to u_B^s$ strongly in $L^2(\Omega)$. 

Thus, there exists a sequence $\ve_j\downarrow 0$ such that $u_{A_{k_j}\cup B^{{\ve}_j}}
^s \to u_B^s$ strongly in $L^2(\Omega)$. That is, $A_{k_j}\cup B^{{\ve}_j}=:B_{k_j}\stackrel{\gamma_s}{\to} B$, where 	$\{B_{k_j}\}_{j\in \N}$ is the {\em enlarged sequence}.
\end{proof}

\subsection{An auxiliary functional}

Fix $m \in \N$ and $0<s<1$. Let $F_s\colon \A_s(\Omega)^m \to [0,\infty]$ be a decreasing and $\gamma_s$-lower semicontinuous functional. 

We define a functional $G_s\colon \mathcal{K}_s^m \to [0,\infty]$ 
\begin{equation}\label{G.s}
G_s(w_1,\dots,w_m):=\inf \left\{  \liminf_{k\to\infty}J_s(w_1^k,\dots,w_m^k) \colon w_i^k\to w_i \mbox{ strongly in } L^2(\Omega)    \right\},
\end{equation}
where $J_s\colon \K_s^m \to [0,\infty]$ is defined as
$$
J_s(w_1,\dots,w_m):=\inf \left\{ F_s(A_1,\dots,A_m)\colon A_i\in \A_s(\Omega), \ u_{A_i}^s\le w_i \mbox{ for } i=1,\dots, m   \right\}
$$ 
and $\K_s$ was given in \eqref{Ks}.
 
We will show that $G_s$ satisfies the following properties:
\begin{itemize}
\item[($G_1$)] $G_s$ is decreasing on $\mathcal{K}_s^m$, that is $G_s(u_1,\dots, u_m)\ge G_s(v_1,\dots, v_m)$, if $u_i\le v_i$ for every $i=1,\dots,m$. 
\item[($G_2$)] $G_s$ is lower semicontinuous on $\mathcal{K}_s$ with respect to the strong topology on $L^2(\Omega)$, 
\item[($G_3$)] $G_s(u_{A_1}^s,\dots, u_{A_m}^s)=F_s(A_1,\dots, A_m)$ for every $(A_1,\dots, A_m)\in\A_s(\Omega)^m$.
\end{itemize}	

The conditions $(G_1)$ and $(G_2)$ are easy to check and it is the content of next proposition.

\begin{prop}
With the notation above, $G_s$ satisfies $(G_1)$ and $(G_2)$.
\end{prop}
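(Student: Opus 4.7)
The plan is to reduce everything to a monotonicity property of the auxiliary functional $J_s$ and then handle $(G_1)$ and $(G_2)$ by two standard moves: a ``max with competitor'' trick for the former, and the usual diagonal relaxation argument for the latter.

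First I would observe that $J_s$ itself is decreasing on $\K_s^m$. Indeed, if $u_i\le v_i$ pointwise $s$-q.e., then every tuple $(A_1,\dots,A_m)$ admissible in the infimum defining $J_s(u_1,\dots,u_m)$ (i.e., satisfying $u_{A_i}^s\le u_i$) automatically satisfies $u_{A_i}^s\le v_i$ and is therefore admissible in $J_s(v_1,\dots,v_m)$; taking infima gives $J_s(v_1,\dots,v_m)\le J_s(u_1,\dots,u_m)$. To deduce $(G_1)$, fix $u\le v$ in $\K_s^m$ and any approximating sequence $\{u^k\}\subset\K_s^m$ with $u_i^k\to u_i$ in $L^2(\Omega)$. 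Set
$$
v_i^k:=\max\{v_i,u_i^k\}.
$$
By Proposition \ref{propKs} each $v_i^k\in\K_s$, and since $u_i\le v_i$ the strong $L^2$-convergence $u_i^k\to u_i$ yields $v_i^k\to\max\{v_i,u_i\}=v_i$ in $L^2(\Omega)$. Because $v_i^k\ge u_i^k$ for every $i$ and $J_s$ is decreasing, $J_s(v^k)\le J_s(u^k)$ for all $k$, hence $G_s(v)\le\liminf_k J_s(v^k)\le\liminf_k J_s(u^k)$. Taking the infimum over all admissible $u^k\to u$ produces $G_s(v)\le G_s(u)$, which is $(G_1)$.

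For $(G_2)$ I would run the classical diagonal argument available because $L^2(\Omega)$ is metrizable. Let $w^j\to w$ strongly in $L^2(\Omega)^m$ with $w^j,w\in\K_s^m$, and assume without loss of generality $\ell:=\liminf_j G_s(w^j)<\infty$; passing to a subsequence, take $G_s(w^j)\to\ell$. For each fixed $j$, the definition of $G_s$ supplies a sequence $\{w^{j,k}\}_k\subset\K_s^m$ with $w^{j,k}\to w^j$ in $L^2$ as $k\to\infty$ and $\liminf_k J_s(w^{j,k})\le G_s(w^j)+1/j$. By the definition of $\liminf$ one can then choose an index $k(j)$ such that simultaneously $\|w^{j,k(j)}-w^j\|_{L^2}\le 1/j$ and $J_s(w^{j,k(j)})\le G_s(w^j)+2/j$. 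The diagonal sequence $\{w^{j,k(j)}\}_j$ lies in $\K_s^m$ and converges to $w$ strongly in $L^2(\Omega)^m$ by the triangle inequality, so by the very definition of $G_s$,
$$
G_s(w)\le\liminf_{j\to\infty} J_s(w^{j,k(j)})\le\liminf_{j\to\infty}\bigl(G_s(w^j)+2/j\bigr)=\ell,
$$
which is exactly $(G_2)$.

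I do not expect a serious obstacle: $(G_1)$ hinges on $\max\{v_i,u_i^k\}$ remaining in $\K_s$, which is precisely the lattice property recorded in Proposition \ref{propKs}, and $(G_2)$ is the standard fact that the sequential $\liminf$-envelope of any functional on a metric space is lower semicontinuous. The only point requiring mild care is to ensure the approximating sequences can be chosen in $\K_s^m$ rather than merely in $L^2$; this is automatic because the $J_s$-values of tuples outside $\K_s^m$ are irrelevant (such tuples play no role in the $\inf$ defining $G_s$), and because $\K_s$ is closed under both $L^2$-convergence (being convex and closed in $H_0^s(\Omega)$, hence weakly closed, combined with the $H_0^s$-boundedness from Proposition \ref{propKs}) and the $\max$ operation.
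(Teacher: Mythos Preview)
Your proof is correct and follows essentially the same route as the paper: for $(G_1)$ the paper uses exactly your ``max with competitor'' trick $v_i^k:=\max\{v_i,u_i^k\}$ together with Proposition~\ref{propKs} and the monotonicity of $J_s$, while for $(G_2)$ the paper simply says it holds ``by construction'', which is precisely the diagonal/relaxation argument you spell out in full. The only cosmetic difference is that the paper picks from the start a sequence $u^k$ realizing $G_s(u)$ as a limit, whereas you work with an arbitrary admissible sequence and infimize at the end; the content is the same.
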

\begin{proof}
By construction, it is clear that $G_s$ verifies $(G_2)$. 

To prove $(G_1)$, let $(u_1,\dots, u_m), (v_1,\dots, v_m)\in \K_s^m$ such that $u_i\le v_i$ for every $i=1,\dots,m$. 

Take $\{u_i^k\}_{k\in \N} \subset \K_s$ such that $u_i^k\to u_i$ strongly in $L^2(\Omega)$  for every $i=1,\dots,m$ and 
$$
G_s(u_1,\dots, u_m)=\lim_{k\to \infty}J_s(u_1^k,\dots, u_m^k). 
$$

Consider $v_i^k:=\max\{v_i, u_i^k\}$ for every $i=1,\dots,m$ and $k\in \N$. By Proposition \ref{propKs}, we obtain that $v_i^k \in \K_s$. In addition, $v_i^k\to \max\{v_i, u_i\}=v_i$ strongly in $L^2(\Omega)$, for every $i=1,\dots,m$. Thus, noticing that $J_s$ is decreasing, we have
$$
G_s(v_1,\dots,v_m)\le \liminf_{k\to\infty}J_s(v_1^k,\dots,v_m^k)\le \lim_{k\to\infty}J_s(u_1^k,\dots,u_m^k)=G_s(u_1,\dots,u_m).
$$
\end{proof}

Now, we prove the most important property of $G_s$, which is the connection with the cost functional $F_s$.

\begin{prop}
The functional $G_s$ satisfies $(G_3)$.
\end{prop}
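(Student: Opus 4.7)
The equality $(G_3)$ has two directions. For the upper bound $G_s(u^s_{A_1},\dots,u^s_{A_m})\le F_s(A_1,\dots,A_m)$, I would simply test the outer infimum in \eqref{G.s} with the constant sequence $w_i^k = u^s_{A_i}$, and test the inner infimum defining $J_s$ with $(A_1,\dots,A_m)$ itself (which is admissible, since $u^s_{A_i}\le u^s_{A_i}$). This immediately yields $G_s(u^s_{A_1},\dots,u^s_{A_m})\le J_s(u^s_{A_1},\dots,u^s_{A_m})\le F_s(A_1,\dots,A_m)$.

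The nontrivial direction is $F_s(A_1,\dots,A_m)\le G_s(u^s_{A_1},\dots,u^s_{A_m})$. Fix any sequence $w_i^k\to u^s_{A_i}$ strongly in $L^2(\Omega)$ with $w_i^k\in\K_s$, and choose near-minimizers $(A_1^k,\dots,A_m^k)\in\A_s(\Omega)^m$ with $u^s_{A_i^k}\le w_i^k$ and
\[
F_s(A_1^k,\dots,A_m^k)\le J_s(w_1^k,\dots,w_m^k)+\tfrac{1}{k}
\]
(the case where $J_s(w_1^k,\dots,w_m^k)=+\infty$ infinitely often is trivial). Since each $u^s_{A_i^k}\in\K_s$ by Proposition \ref{uAmax}, Remark \ref{wcompacidaddominios} and a diagonal extraction over $i=1,\dots,m$ produce a subsequence along which $u^s_{A_i^k}\to v_i$ strongly in $L^2(\Omega)$ for each $i$, with $A_i^k\stackrel{\gamma_s}{\cd}\tilde A_i:=\{v_i>0\}$. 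Passing to the limit in $u^s_{A_i^k}\le w_i^k$ gives $v_i\le u^s_{A_i}$, and using the $s$-quasi-continuous representatives from Theorem \ref{representante} together with the identification $A_i=\{u^s_{A_i}>0\}$, one obtains $\tilde A_i\subset A_i$ in the $s$-capacity sense.

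The key step is now to upgrade this weak $\gamma_s$-convergence to something that involves $A_i$ rather than the possibly smaller $\tilde A_i$. I would apply Proposition \ref{keyprop} in each coordinate, again with successive subsequence extractions, to produce enlarged sets $B_i^k\supset A_i^k$ such that $B_i^k\stackrel{\gamma_s}{\to}A_i$; in particular $B_i^k\stackrel{\gamma_s}{\cd}A_i$. The weak $\gamma_s$-lower semicontinuity assumption on $F_s$ applied to $(B_1^k,\dots,B_m^k)$ then yields $F_s(A_1,\dots,A_m)\le\liminf_{k\to\infty}F_s(B_1^k,\dots,B_m^k)$, and the monotonicity hypothesis combined with $B_i^k\supset A_i^k$ gives $F_s(B_1^k,\dots,B_m^k)\le F_s(A_1^k,\dots,A_m^k)$. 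Chaining these with the near-minimality estimate produces
\[
F_s(A_1,\dots,A_m)\le\liminf_{k\to\infty}J_s(w_1^k,\dots,w_m^k),
\]
and taking the infimum over sequences $(w_1^k,\dots,w_m^k)$ converging to $(u^s_{A_1},\dots,u^s_{A_m})$ finishes the proof.

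The step I expect to be delicate is the invocation of Proposition \ref{keyprop}: it is precisely the tool that compensates for the fact that the weak $\gamma_s$-limit of the near-minimizers lands in a subset $\tilde A_i$ of $A_i$, which would otherwise prevent us from applying the lower semicontinuity of $F_s$ at the point $(A_1,\dots,A_m)$. The compatibility of the multiple subsequence extractions (one for the $L^2$ precompactness, then one per coordinate for the enlargement) is a bookkeeping matter but needs to be handled carefully so that all the convergences hold simultaneously along a common subsequence.
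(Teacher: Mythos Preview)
Your proposal is correct and follows essentially the same route as the paper's proof: the easy inequality by testing with the constant sequence, then for the reverse inequality choosing near-minimizers $(A_1^k,\dots,A_m^k)$ of $J_s$, extracting a weak $\gamma_s$-convergent subsequence landing in $\tilde A_i\subset A_i$, and invoking Proposition~\ref{keyprop} in each coordinate to enlarge to $B_i^k\supset A_i^k$ with $B_i^k\stackrel{\gamma_s}{\to}A_i$, after which monotonicity and lower semicontinuity of $F_s$ close the argument. The only cosmetic difference is that the paper phrases the hypothesis on $F_s$ here as (strong) $\gamma_s$-lower semicontinuity rather than weak, but since $B_i^k\stackrel{\gamma_s}{\to}A_i$ implies $B_i^k\stackrel{\gamma_s}{\cd}A_i$ via Proposition~\ref{propA}, either form suffices and your version is fine.
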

\begin{proof}
By definition of $G_s$ \eqref{G.s}, it is clear that $G_s(u_{A_1}^s,\dots,u_{A_m}^s)\le F_s(A_1,\dots, A_m)$, for every $(A_1,\dots, A_m)\in \A_s(\Omega)^m$.

To obtain the other inequality, it is enough to prove that for every sequence $\{u_i^k\}_{k\in\N}\subset \K_s(\Omega)$ such that $u_i^k\to u_{A_i}^s$ strongly in $L^2(\Omega)$ for $i=1,\dots, m$, we have
$$
F_s(A_1,\dots,A_m)\le \liminf_{k\to\infty}J_s(u_1^k,\dots, u_m^k).
$$

By definition of $J_s$, there exists $\{(A_1^k,\dots,A_m^k)\}_{k\in \N} \subset  \A_s(\Omega)^m$ such that
\begin{equation}\label{FsJs}
u_{A_i^k}^s\le u_i^k \mbox{ for } i=1,\dots, m, \, \mbox{ and } \, F_s(A_1^k,\dots, A_m^k)\le J_s(u_1^k,\dots,u_m^k)+\frac{1}{k}.
\end{equation}

By Remark \ref{wcompacidaddominios}, there exists $v_i\in \K_s$ such that $u_{A_i^k}^s\to v_i$ strongly in $L^2(\Omega)$, up to a subsequence. That is, $A_i^k\stackrel{\gamma_s}{\cd}B_i:=\{v_i>0\}\in \A_s(\Omega)$, for every $i=1,\dots,m$. 

Moreover, taking the limit in $u_{A_i^k}^s\le u_i^k$, we obtain that $v_i\le u_{A_i}^s$ for every $i=1,\dots,m$. In addition, we have $B_i\subset A_i=\{u_{A_i}^s>0\}$. We are able to apply Proposition \ref{keyprop}, to obtain the existence of subsequences $\{A_i^{k_j}\}_{j\in \N},\{B_i^{k_j}\}_{j\in \N} \subset \A_s(\Omega)$ such that $A_i^{k_j} \subset B_i^{k_j}$ and $B_i^{k_j}\stackrel{\gamma_s}{\to} A_i$.

Now, using the $\gamma_s$-lower semicontinuity and decreasing property of $F_s$ and \eqref{FsJs}, we conclude
\begin{align*}
F_s(A_1,\dots,A_m)&\le \liminf_{j\to \infty}{F_s(B_1^{k_j},\dots, B_m^{k_j})}\\
&\le \liminf_{j\to \infty}{F_s(A_1^{k_j},\dots, A_m^{k_j})} \\
&\le \liminf_{j\to \infty}{J_s(u_1^{k_j},\dots, u_m^{k_j})},
\end{align*}
which implies the remaining inequality $F_s(A_1,\dots,A_m)\le G_s(u_{A_1}^s,\dots,u_{A_m}^s)$. 
\end{proof}

The decreasing property of a functional $F_s$ makes equivalent its weak and strong $\gamma_s$-lowersemicontinuity, which is the content of next theorem. 

\begin{teo} \label{debileqvfuerte}
Let $m\in \N$ and $0<s<1$. Let $F_s\colon \A_s(\Omega)^m\to[0,\infty]$ be a decreasing functional. Then, the following assertions are equivalent 
\begin{enumerate}
\item $F_s$ is weakly $\gamma_s$-lower semicontinuous.
\item $F_s$ is $\gamma_s$-lower semicontinuous.
\end{enumerate}
\end{teo}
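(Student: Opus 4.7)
The easier direction $(1)\Rightarrow(2)$ reduces to the observation that strong $\gamma_s$-convergence is stronger than weak $\gamma_s$-convergence: if $A_k\stackrel{\gamma_s}{\to}A$ then $u^s_{A_k}\to u^s_A$ in $L^2(\Omega)$, and under the identification $A=\{u^s_A>0\}$ in the $s$-capacity sense (explicitly used in the remarks preceding Proposition \ref{propKs}), this is precisely weak $\gamma_s$-convergence with the same limit set $A$. Consequently the liminf inequality required by weak $\gamma_s$-lower semicontinuity specializes immediately to the one required by $\gamma_s$-lower semicontinuity.

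For the substantive direction $(2)\Rightarrow(1)$, I would start from a sequence $(A_1^k,\dots,A_m^k)\stackrel{\gamma_s}{\cd}(A_1,\dots,A_m)$, set $L:=\liminf_{k} F_s(A_1^k,\dots,A_m^k)$, and pass to a subsequence along which $F_s(A_1^k,\dots,A_m^k)\to L$. The key ingredient is Proposition \ref{keyprop}, which allows one to \emph{enlarge} a weakly $\gamma_s$-convergent sequence, after extraction, into a strongly $\gamma_s$-convergent one whose limit is any prescribed set containing the weak limit. I would apply this proposition one coordinate at a time with $A=B=A_i$: first extract a subsequence and enlargements $B_1^k\supset A_1^k$ with $B_1^k\stackrel{\gamma_s}{\to}A_1$; then, along the new subsequence, and using the fact that weak $\gamma_s$-convergence is preserved under taking subsequences, repeat the construction for the second coordinate, and so on. After $m$ nested extractions I obtain a single subsequence $\{k_j\}$ and enlarged domains $B_i^{k_j}\supset A_i^{k_j}$ in $\A_s(\Omega)$ with $B_i^{k_j}\stackrel{\gamma_s}{\to}A_i$ for every $i=1,\dots,m$.

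The conclusion is then immediate from the two structural assumptions on $F_s$: monotonicity gives $F_s(B_1^{k_j},\dots,B_m^{k_j})\le F_s(A_1^{k_j},\dots,A_m^{k_j})$, and $\gamma_s$-lower semicontinuity applied to the enlarged (strongly $\gamma_s$-convergent) sequence yields
$$
F_s(A_1,\dots,A_m)\le \liminf_{j\to\infty} F_s(B_1^{k_j},\dots,B_m^{k_j}) \le \liminf_{j\to\infty} F_s(A_1^{k_j},\dots,A_m^{k_j}) = L,
$$
which is the required weak $\gamma_s$-lower semicontinuity inequality.

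The only delicate point I anticipate is the coordinate-wise bookkeeping of subsequences: I must ensure that the weak $\gamma_s$-convergence of the coordinates still to be treated survives each earlier extraction (which it does, trivially, since subsequences of weakly $\gamma_s$-convergent sequences remain weakly $\gamma_s$-convergent) and that the final subsequence is common to all $m$ coordinates. Apart from this mild combinatorial care, everything reduces to a direct invocation of Proposition \ref{keyprop} in each coordinate combined with the decreasing property of $F_s$.
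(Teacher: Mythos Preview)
Your proof is correct, but it takes a different route from the paper for the direction $(2)\Rightarrow(1)$. The paper does not invoke Proposition~\ref{keyprop} directly; instead it passes through the auxiliary functional $G_s$ defined in~\eqref{G.s} and its three structural properties $(G_1)$--$(G_3)$: given $A_i^k\stackrel{\gamma_s}{\cd}A_i$ with $u_{A_i^k}^s\to u_i$ in $L^2(\Omega)$, one has $u_i\le u_{A_i}^s$ by Proposition~\ref{uAmax}, and then the chain
\[
F_s(A_1,\dots,A_m)=G_s(u_{A_1}^s,\dots,u_{A_m}^s)\le G_s(u_1,\dots,u_m)\le\liminf_k G_s(u_{A_1^k}^s,\dots,u_{A_m^k}^s)=\liminf_k F_s(A_1^k,\dots,A_m^k)
\]
uses $(G_3)$, $(G_1)$, $(G_2)$, $(G_3)$ in that order. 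Of course, the hard step here is property $(G_3)$, whose proof is essentially the enlargement argument you wrote down---so in substance you have extracted the computational core of the paper's approach and applied it to the theorem without the intermediate abstraction. Your version is shorter and avoids introducing $J_s$ and $G_s$ altogether; the paper's version isolates $G_s$ as an object of independent interest (a lower semicontinuous envelope on $\K_s^m$ that agrees with $F_s$ on torsion functions), which may be conceptually useful elsewhere but is not strictly needed for this statement.
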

\begin{proof}
 It is enough to prove that the $\gamma_s$-lower semicontinuity implies the weak one. Indeed, consider $\{A_k\}_{k\in \N} \subset \A_s(\Omega)$ such that $A_k \stackrel{\gamma_s}{\to}A\in \A_s(\Omega)$. By Proposition \ref{propA}, $A=\{u_A^s>0\}$. Then, $A_k \stackrel{\gamma_s}{\cd}A\in \A_s(\Omega)$. That proves $(1)\Rightarrow (2)$.

Assume $F_s$ is $\gamma_s$-lower semicontinuous. 

Let $\{A_i^k\}_{k\in\N}\subset \A_s(\Omega)$ such that $A_i^k \stackrel{\gamma_s}{\cd}A_i \in \A_s(\Omega) $, for $i=1,\dots,m$. That is, $u_{A_i^k}^s\to u_i$ strongly in $L^2(\Omega)$ and $A_i=\{u_i>0\}$. 

Since for every $i=1,\dots, m$, $\{u_{A_i^k}^s\}_{k\in \N}\subset \K_s$, by Proposition \ref{propKs}, $u_i \in \K_s$. Moreover, by Proposition \ref{uAmax},  $u_i\le u_{A_i}^s$.

Thus, recalling the funcional $G_s$ defined by \eqref{G.s} and its properties $(G_3),(G_1)$, $(G_2)$ and again $(G_3)$, we conclude
\begin{align*}
F_s(A_1,\dots,A_m)&=G_s(u_{A_1}^s,\dots,u_{A_m}^s)\le G_s(u_{1},\dots,u_{m})\\
&\le \liminf_{k\to \infty} G_s(u_{A_1^k}^s,\dots,u_{A_m^k}^s)\\
&=\liminf_{k\to \infty}F_s(A_1^k,\dots,A_m^k).\\
\end{align*}
That means $F_s$ is weak $\gamma_s$-lower semicontinuous, as we desired.
\end{proof}

\subsection{Existence of an optimal partition}
With the help of the previous outcomes of this section, we are able to prove existence of a minimal partition shape for \eqref{min.s}.

\begin{proof}[Proof of Theorem \ref{main.s}]
Denote by
$$
\alpha:=\inf \left\{ F_s(A_1,\dots,A_m) \colon A_i\in \A_s(\Omega), \cp_s(A_i\cap A_j, \Omega)=0 \mbox{ for } i\neq j \right\}.
$$

Let $\{(A_1^k,\dots,A_m^k)\}_{k\in\N}\subset \A_s(\Omega)^m$ be such that  
$$
\cp_s(A_i^k\cap A_j^k,\Omega)=0 \mbox{ for } i\neq j, \, \mbox{ and } \,  \lim_{k\to \infty}F_s(A_1^k,\dots,A_m^k)=\alpha.
$$

By Remark \ref{wcompacidaddominios}, there exist $A_1\in \A_s(\Omega)$  and a subsequence $\{A_1^{k_j}\}_{j\in \N} \subset \{A_1^k\}_{k\in \N}$ such that $A_1^{k_j}\stackrel{\gamma_s}{\cd}A_1$. Now, consider $\{A_2^{k_j}\}_{j\in \N}$ and apply again Remark \ref{wcompacidaddominios}. Thus, there exist $A_2\in \A_s(\Omega)$ and a subsequence $\{A_2^{k_{j_l}}\}_{l\in \N} \subset  \{A_2^{k_j}\}_{j\in \N}$ such that $A_i^{k_{j_l}} \stackrel{\gamma_s}{\cd} A_i$ for $i=1,2$. Repeating this argument, we find a sequence $\{(A_1^k,\dots, \A_m^k)\}_{k\in \N}$ and $(A_1,\dots,A_m) \in \A_s(\Omega)$ such that $A_i^k\stackrel{\gamma_s}{\cd} A_i$ for every $i=1,\dots,m$.

Since $F_s$ is weak $\gamma_s$-lower semicontinuous, we obtain  
\begin{equation}\label{casi}
F_s(A_1,\dots,A_m) \le \liminf_{k\to\infty}F_s(A_1^k,\dots,A_m^k)=\alpha.
\end{equation}

To finish the proof, let us see $\cp_s(A_i\cap A_j,\Omega)=0$ for $i\neq j$ be satisfied.

Let $i,j \in \{1,\dots, m\}$ be such that $i\neq j$. Notice that this product $u_{A_i^k}^s \cdot u_{A_j^k}^s$ is an $s$-continuous function too, by Lemma \ref{productofunciones}, and $u_{A_i^k}^s \cdot u_{A_j^k}^s=0$ $s$-q.e. in $\R^n \setminus (A_i^k\cap A_j^k)$. Moreover, since $\cp_s(A_i^k \cap A_j^k,\Omega)=0$, we have $u_{A_i^k}^s \cdot u_{A_j^k}^s=0$ $s$-q.e. in $\R^n$. 

By \cite[Lemma 3.8]{Warma}, there exist subsequences $\{u_{A_i^k}^s\}_{k\in \N}$ and $\{u_{A_j^k}^s\}_{k\in \N}$, denoted with the same index, which converge $s$-q.e. to $u_i$ and $u_j$ respectively. Then, passing to the limit, we obtain $u_i \cdot u_j=0$ $s$-q.e. in $\R^n$. That is $\cp_s(\{u_i \cdot u_j \neq 0\},\Omega)=0$. But, $\{u_i \cdot u_j \neq 0\}=A_i \cap A_j$.

We have shown that $(A_1,\dots,A_m)$ is admissible for the minimization problem \eqref{min.s} and recalling \eqref{casi} the result is proved. 
\end{proof}

Due to Theorems \ref{debileqvfuerte} and \ref{main.s}, we can establish the next immediate corollary.	

\begin{corol}\label{coro.s}
Let $F_s \colon \A_s(\Omega)^m\to [0,\infty]$ be a decreasing and $\gamma_s$-lower semicontinuous functional. Then, there exists a solution to \eqref{min.s}.
\end{corol}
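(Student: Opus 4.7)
The plan is to observe that Corollary \ref{coro.s} is an almost immediate consequence of two results already established in this section, namely Theorem \ref{debileqvfuerte} (equivalence of strong and weak $\gamma_s$-lower semicontinuity for decreasing functionals) and Theorem \ref{main.s} (existence under the weak $\gamma_s$-lower semicontinuity hypothesis). So no new machinery is needed; the proof is a short chain of implications.

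Concretely, I would proceed as follows. First, I would take the given functional $F_s\colon \A_s(\Omega)^m\to[0,\infty]$ which, by hypothesis, is decreasing and (strongly) $\gamma_s$-lower semicontinuous. Second, I would invoke Theorem \ref{debileqvfuerte}, whose statement is precisely that for a decreasing functional $F_s$, strong and weak $\gamma_s$-lower semicontinuity are equivalent (the nontrivial implication $(2)\Rightarrow(1)$ is the one used here). This upgrades our hypothesis, yielding that $F_s$ is in fact weak $\gamma_s$-lower semicontinuous. Third, with this stronger property at hand, I would directly apply Theorem \ref{main.s}, whose conclusion is the existence of a minimizer for \eqref{min.s}. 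The composition of these two steps produces the desired solution.

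There is no substantial obstacle: the heavy lifting has already been carried out in the proofs of Theorem \ref{main.s} (which uses the auxiliary functional $G_s$, the $\K_s$-compactness via Proposition \ref{propKs}, the enlargement Proposition \ref{keyprop}, and the $s$-capacity argument to handle the disjointness constraint) and Theorem \ref{debileqvfuerte} (which uses the properties $(G_1)$--$(G_3)$ of $G_s$). The only thing one must be careful about is noting that the $m$-tuple notion of strong $\gamma_s$-convergence is defined coordinatewise, so the equivalence and the existence theorem both apply verbatim in the partition setting; this is the observation that makes the corollary a genuine ``immediate'' consequence and not just a restatement.
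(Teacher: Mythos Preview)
Your proposal is correct and matches the paper's own argument exactly: the corollary is stated as an immediate consequence of Theorems \ref{debileqvfuerte} and \ref{main.s}, using the former to upgrade strong $\gamma_s$-lower semicontinuity to weak $\gamma_s$-lower semicontinuity and then applying the latter.
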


\section{Proof of Theorem \ref{main}} \label{sec.teo2}

In this part of the article, we study the behavior of the optimal partition shapes obtained in Section \ref{sec.teo1} and their minimun values. Again, we use some results from \cite{BRS}.

\begin{lema}[Lemma 4.1, \cite{BRS}] \label{Ksacotado}
Let $0<s_k\uparrow 1$ and let $u_k\in \K_{s_k}$. Then, there exists $u\in H^1_0(\Omega)$  and a subsequence $\{u_{k_j}\}_{j\in \N}\subset\{u_k\}_{k\in\N}$ such that $u_{k_j} \to u$ strongly in $L^2(\Omega)$. 
	
Moreover, if $u_k \in \K_{s_k}$ is such that $u_k\to u$ strongly in $L^2(\Omega)$, then $u\in \K_1$.
\end{lema}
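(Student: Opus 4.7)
The plan is to combine three ingredients: a uniform energy estimate for elements of $\K_{s_k}$, a Bourgain--Brezis--Mironescu (BBM) type compactness theorem that handles sequences of fractional seminorms with $s_k\uparrow 1$, and a duality/testing argument to pass the constraint $(-\Delta)^{s_k}u_k\le 1$ to the limit.

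First, I would derive uniform bounds on the sequence $\{u_k\}$ independently of $s_k$. Since $u_k\in \K_{s_k}$ is nonnegative and satisfies $(-\Delta)^{s_k}u_k\le 1$ in $\Omega$, testing the inequality against $u_k$ itself in $H^s_0(\Omega)$ gives
\begin{equation*}
\frac{c(n,s_k)}{2}[u_k]_{s_k}^2 \le \int_\Omega u_k\,dx.
\end{equation*}
To control the right-hand side, I would use a comparison principle (analogous to Proposition \ref{uAmax}) to dominate $u_k$ by the torsion function $u^{s_k}_B$ of any large ball $B\supset\Omega$. Since $u^{s_k}_B$ can be written explicitly and converges to the classical torsion function of $B$, one obtains $\|u_k\|_\infty\le C$ independent of $k$, hence $\|u_k\|_2\le C|\Omega|^{1/2}$. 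Because $c(n,s_k)$ behaves like $(1-s_k)$ as $s_k\uparrow 1$, this yields the crucial estimate
\begin{equation*}
(1-s_k)\,[u_k]_{s_k}^2 \le C.
\end{equation*}

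Next, I would invoke the BBM-type compactness theorem (in the form due to Ponce, or Bourgain--Brezis--Mironescu): a sequence $\{u_k\}\subset L^2(\R^n)$ supported in $\Omega$ with uniform $L^2$-bound and with $(1-s_k)[u_k]_{s_k}^2$ uniformly bounded is relatively compact in $L^2(\R^n)$, and any cluster point $u$ lies in $H^1_0(\Omega)$ with $\|\nabla u\|_2^2 \le C'\,\liminf_k(1-s_k)[u_k]_{s_k}^2$. This delivers the subsequence $u_{k_j}\to u$ strongly in $L^2(\Omega)$ with $u\in H^1_0(\Omega)$, proving the first claim.

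For the ``Moreover'' part, assume now that the whole sequence $u_k\to u$ in $L^2(\Omega)$; the first paragraph already shows $u\in H^1_0(\Omega)$, and $u\ge 0$ passes to the limit trivially. The only thing left is $-\Delta u\le 1$ in $\Omega$ in the distributional sense. I would fix a nonnegative $\varphi\in C_c^\infty(\Omega)$ and rewrite the constraint in dual form:
\begin{equation*}
\int_\Omega u_k\,(-\Delta)^{s_k}\varphi\,dx=\langle(-\Delta)^{s_k}u_k,\varphi\rangle\le\int_\Omega \varphi\,dx.
\end{equation*}
Because $\varphi$ is smooth and compactly supported, $(-\Delta)^{s_k}\varphi\to -\Delta\varphi$ in $L^2(\R^n)$ (the Fourier multipliers $|\xi|^{2s_k}\widehat\varphi$ converge to $|\xi|^2\widehat\varphi$ with dominated convergence on the Schwartz side); combined with $u_k\to u$ in $L^2$, I can pass to the limit to obtain $\int u(-\Delta\varphi)\le\int\varphi$, which is exactly $-\Delta u\le 1$ in $\Omega$. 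Hence $u\in\K_1$.

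The main obstacle is the compactness step: one must really have the BBM framework available in order to cope with the fact that the seminorms $[\,\cdot\,]_{s_k}$ live in moving spaces $H^{s_k}_0(\Omega)$. The $L^\infty$ bound via comparison with a ball's torsion function is routine but must be handled carefully so that the resulting constant is independent of $s_k$; once that is in place, the rest of the argument consists of standard testing and lower semicontinuity.
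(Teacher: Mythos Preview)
The paper does not actually prove this lemma: it is quoted verbatim as Lemma 4.1 of \cite{BRS} and used as a black box, so there is no in-paper argument to compare against. That said, your outline is correct and is the natural route to the result. The energy identity obtained by testing $(-\Delta)^{s_k}u_k\le 1$ against $u_k$, the comparison with the explicit torsion function on a ball $B\supset\Omega$ to get a uniform $L^\infty$ bound, and the fact that $c(n,s)\sim c_n(1-s)$ as $s\uparrow1$ combine exactly as you say to give $\sup_k(1-s_k)[u_k]_{s_k}^2<\infty$. The BBM/Ponce compactness theorem then yields the subsequence and the $H^1_0(\Omega)$ regularity of the limit. For the second part, your duality argument is clean: self-adjointness gives $\langle(-\Delta)^{s_k}u_k,\varphi\rangle=\int_{\R^n}u_k\,(-\Delta)^{s_k}\varphi$, and the Fourier-side dominated convergence $(-\Delta)^{s_k}\varphi\to-\Delta\varphi$ in $L^2(\R^n)$ for $\varphi\in C_c^\infty(\Omega)$ lets you pass to the limit. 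This is almost certainly the same scheme used in \cite{BRS}; the only point worth tightening in a full write-up is to check that the constant in the $L^\infty$ comparison (coming from $\|u_B^{s_k}\|_\infty=C_{n,s_k}R^{2s_k}$) stays bounded as $s_k\uparrow1$, which is elementary.
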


Next proposition gives an idea of the limit behavior of $u_A^s$ when the domains also are varying with $s$.

\begin{prop}[Proposition 4.5, \cite{BRS}] \label{suplente}
Let $0<s_k\uparrow1, A^k \in \A_{s_k}(\Omega)$ be such that $u_{A^k}^{s_k}\to u$ strongly in $L^2(\Omega)$. Then, there exist $\tilde{A}^k \in \A_{s_k}(\Omega)$ such that $A^k \subset \tilde{A}^k$ and $\tilde{A}^k$ $\gamma-$converges to $A:=\{ u>0\}$.
\end{prop}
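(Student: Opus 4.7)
\medskip

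\noindent\textbf{Proof plan.} The strategy parallels Proposition \ref{keyprop}, with the fixed exponent $s$ replaced by the varying $s_k\uparrow 1$; the bridge between $\K_{s_k}$ and $\K_1$ is provided by Lemma \ref{Ksacotado}, which plays here the role that Proposition \ref{propKs} plays in the fixed-$s$ argument.

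First, apply Lemma \ref{Ksacotado} to $\{u_{A^k}^{s_k}\}\subset \K_{s_k}$ and extract, without relabeling, a subsequence whose limit $u$ belongs to $\K_1$. Since $A=\{u>0\}$ and $u=0$ on $\R^n\setminus A$, the $s=1$ analogue of Proposition \ref{uAmax} (i.e.\ the maximum characterization of $u_A^1$) yields $u\le u_A^1$. Next, for each $\ve>0$ set $A^\ve:=\{u_A^1>\ve\}$, which is an open subset of $A$, hence an element of $\A_{s_k}(\Omega)$ for every $k$. Following Proposition \ref{keyprop}, introduce the enlarged candidates
$$
\tilde B_k^\ve:=A^k\cup A^\ve\in \A_{s_k}(\Omega),\qquad \tilde B_k^\ve\supset A^k.
$$

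Fix $\ve>0$. The sequence $\{u_{\tilde B_k^\ve}^{s_k}\}_k\subset \K_{s_k}$ is pre-compact in $L^2(\Omega)$ by Lemma \ref{Ksacotado}, so along a (further) subsequence it converges strongly in $L^2(\Omega)$ to some $u^\ve\in \K_1$. The plan is to sandwich
$$
(u_A^1-\ve)^+\le u^\ve\le u_A^1.
$$
For the lower bound, use the inequality $(u_A^1-\ve)^+\le u_{A^\ve}^1$ from the $s=1$ version of the argument appearing inside the proof of Proposition \ref{keyprop}, together with the fractional-to-classical stability of the torsion function on the fixed open set $A^\ve$, i.e.\ $u_{A^\ve}^{s_k}\to u_{A^\ve}^1$ in $L^2(\Omega)$, which follows from Lemma \ref{Ksacotado} applied to $\{u_{A^\ve}^{s_k}\}\subset \K_{s_k}$ combined with the variational characterization of the limit; since $A^\ve\subset \tilde B_k^\ve$ implies $u_{A^\ve}^{s_k}\le u_{\tilde B_k^\ve}^{s_k}$, passage to the limit gives $u^\ve\ge (u_A^1-\ve)^+$. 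For the upper bound, the idea is to propagate through the enlargement procedure the inequality $u\le u_A^1$ established above; this is the varying-$s_k$ counterpart of the step in Proposition \ref{keyprop} where \cite[Lemma 3.6]{BRS} was invoked, and it again places $u^\ve$ in $\K_1$ with zero trace (quasi-everywhere with respect to the $1$-capacity) outside $A$, so the maximum characterization of $u_A^1$ yields $u^\ve\le u_A^1$.

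Finally, a diagonal extraction closes the proof. As $\ve\downarrow 0$, $(u_A^1-\ve)^+\to u_A^1$ in $L^2(\Omega)$, so the sandwich forces $u^\ve\to u_A^1$ in $L^2(\Omega)$. Choosing $\ve_j\downarrow 0$ and an associated subsequence of indices $k_j\to\infty$ so that $\|u_{\tilde B_{k_j}^{\ve_j}}^{s_{k_j}}-u_A^1\|_{L^2(\Omega)}\to 0$, the sets $\tilde A^{k_j}:=\tilde B_{k_j}^{\ve_j}\in \A_{s_{k_j}}(\Omega)$ satisfy $\tilde A^{k_j}\supset A^{k_j}$ and $\tilde A^{k_j}\stackrel{\gamma}{\to}A$, which is the desired conclusion.

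\medskip

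\noindent\textbf{Main obstacle.} The delicate step is the upper bound $u^\ve\le u_A^1$. In the fixed-$s$ setting of Proposition \ref{keyprop}, this was handled by the comparison result \cite[Lemma 3.6]{BRS} inside a single space $\K_s$. Here the sets, the equations and the capacities all depend on $k$, so the comparison has to be transported along the sequence $s_k\uparrow 1$; it is precisely Lemma \ref{Ksacotado} — identifying the common ambient convex body $\K_1$ for all admissible limits — that allows the nonlocal arguments of Proposition \ref{keyprop} to be recycled in the limit.
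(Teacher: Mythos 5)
This proposition is not proved in the paper at all: it is imported verbatim as \cite[Proposition 4.5]{BRS}, so there is no in-paper proof to compare against, only the fixed-$s$ analogue, Proposition \ref{keyprop}, whose structure you are transplanting. Your skeleton (enlarge by $A^\ve=\{u_A^1>\ve\}$, sandwich $(u_A^1-\ve)^+\le u^\ve\le u_A^1$, diagonalize) is the right shape, but two steps that carry the actual content of the varying-exponent statement are asserted rather than proved.

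First, your lower bound needs $u_{A^\ve}^{s_k}\to u_{A^\ve}^1$ in $L^2(\Omega)$ for the \emph{fixed} set $A^\ve$. Lemma \ref{Ksacotado} only gives a subsequence converging to some element of $\K_1$; it does not identify that limit as $u_{A^\ve}^1$. Identifying it requires a $\Gamma$-- or Mosco--convergence statement for the energies $\tfrac{c(n,s_k)}{2}[\,\cdot\,]_{s_k}^2\to\|\nabla\cdot\|_2^2$ (a liminf inequality plus a recovery sequence respecting the constraint $w=0$ outside $A^\ve$), which is a separate result of \cite{BRS} and is, for a constant sequence of sets, essentially the special case of the proposition you are trying to prove. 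Invoking ``the variational characterization of the limit'' does not close this. Second, the upper bound $u^\ve\le u_A^1$ is the genuinely delicate point and you leave it unproved. Note that $\tilde B_k^\ve=A^k\cup A^\ve$ is \emph{not} contained in $A$ in general (only the limit $u$ of $u_{A^k}^{s_k}$ vanishes outside $A$; the sets $A^k$ themselves may stick out), so it is false on its face that $u^\ve$ has ``zero trace outside $A$'' by inspection; in Proposition \ref{keyprop} this is exactly what \cite[Lemma 3.6]{BRS} delivers, and here one needs a version of that comparison that is uniform as $s_k\uparrow 1$ (in \cite{BRS} this is where the quantitative capacity estimates of the type in Lemma \ref{key} and Corollary \ref{keycorol} enter). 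Declaring it ``the varying-$s_k$ counterpart'' of the fixed-$s$ lemma names the obstacle without removing it. A minor slip besides: $\{u_A^1>\ve\}$ is only $1$-quasi-open, not open; it lies in $\A_{s_k}(\Omega)$ by Remark \ref{A1subsetAs}, not because it is open.
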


Now we are ready to prove the main result of this article.

\begin{proof}[Proof of Theorem \ref{main}]
First, notifce that $m_1$ is achieved by \cite[Theorem 3.2]{Bucur-Buttazzo-Henrot}.

Let $0<s_k\uparrow1$. By Theorem \ref{main.s}, there exists $(A_1^k,\dots, A_m^k)\in \A_{s_k}(\Omega)^m$ such that
\begin{equation}\label{datosAk}
\cp_{s_k}(A_i^k\cap A_j^k,\Omega)=0 \mbox{ for } i\neq j \,  \mbox{ and } \, F_{s_k}(A_1^k,\dots, A_m^k ) = m_k,
\end{equation}
where $m_k=m_{s_k}$ defined in \eqref{min.s}.

Let $(A_1,\dots, A_m) \in \A_1(\Omega)^m$ be such that $\cp_1(A_i\cap A_j,\Omega)=0$ for $i\neq j$. Since $0<s_k\uparrow 1$, we can assume $0<\ve_0<s_k\uparrow 1$, for some fixed $\ve_0$. 

Now, recalling Corollary \ref{keycorol} and  Remark \ref{A1subsetAs}, we know that $(A_1,\dots,A_m)$ belongs to 
$$
\{(B_1,\dots,B_m)\colon B_i\in \A_{s_k}(\Omega), \, \cp_{s_k}(B_i\cap B_j,\Omega)=0 \mbox{ for } i\neq j \},
$$
for every $k\in \N$. This fact and  condition $(H_1)$ imply that
$$
\limsup_{k\to \infty} F_{s_k}(A_1^k,\dots, A_m^k)\le \lim_{k\to \infty} F_{s_k}(A_1,\dots, A_m) = F_1(A_1,\dots, A_m).
$$
It follows that
\begin{equation}\label{abajo}
\limsup_{k\to\infty} m_k\le m_1.
\end{equation}	

To see the remaining inequality, let us denote $u_i^k:=u_{A_i^k}^{s_k} \in \K_{s_k}$. By Lemma \ref{Ksacotado}, there is $u_i\in \K_1$ such that, up to a subsequence, $u_i^k \to u_i$ strongly in $L^2(\Omega)$ and a.e. in $\Omega$.
 

Denote by $A_i:=\{u_i>0\}\in \A_1(\Omega)$ for every $i=1,\dots, m$. We claim that $\cp_1(A_i \cap A_j,\Omega)=0$ for $i\neq j$. 

Indeed, let $i\neq j$ be fixed.
For each $k\in \N$, due to Lemma \ref{lebesguecap} and \eqref{datosAk}, we know that 
$$
|\{u_i^k \cdot u_j^k\neq 0 \}|=|A_i^k\cap A_j^k|\le C(n,s_k)\cp_{s_k}(A_i^k \cap A_j^k, \Omega)=0.
$$
Then, $u_i^k \cdot u_j^k=0 $ a.e. in $\R^n$.
Since $u_l^k\to u_l$ a.e. in $\Omega$ for $l=1,2$, we conclude  $u_i \cdot u_j =0$ a.e in $\Omega$, it is still true in $\R^n \setminus \Omega$ considering that they belong to $H_0^s(\Omega)$. So, $u_i \cdot u_j=0$ a.e. in $\R^n$.

Reminding that we are working with $1$-quasi continuous representative functions in $H_0^1(\Omega)$, the previous identity  $u_i \cdot u_j=0$ a.e. in $\R^n$ and \cite[Lemma 3.3.30]{Henrot-Pierre} tells that $u_i \cdot u_j =0$ $1$-q.e. in $\R^n$. That means, $\cp_1(A_i\cap A_j, \Omega)=0$. 

Consequently, $(A_1,\dots,A_m)$ is admissible to the problem $\ref{min.s}$ with $s=1$ and we obtain 
$m_1\le F_1(A_1,\dots,A_m)$.
	
Moreover, by Proposition \ref{suplente}, there exists $\tilde{A}_i^k \in \A_{s_k}(\Omega)$ such that $A_i^k \subset \tilde{A}_i^k$ and $(\tilde{A}_1^k, \dots, \tilde{A}_m^k)$  $\gamma-$converges to $(A_1,\dots, A_m)$.

Finally, from  condition $(H_2)$ and the decreasing property of $F_{s_k}$, we   conclude that
\begin{align*}
m_1&\le F_1(A_1,\dots,A_m) \leq \liminf_{k\to \infty} F_{s_k}(\tilde{A}_1^{k}, \dots, \tilde{A}_m^{k}) \\
&\leq \liminf_{k\to \infty} F_{s_k}(A_1^k, \dots, A_m^k)=\liminf_{k\to \infty}m_k.
\end{align*}
Therefore, from the previous conclusion and \eqref{abajo} we have the identity \eqref{limite} and the results follow. 
\end{proof}


\appendix

\section{Some useful properties of $s$-capacity} \label{apendice}

The following lemmas address some basic properties of $s$-capacity and $s$-quasi continuous functions. We suppose those results are well-known and we include them for completeness. 

\begin{lema}\label{productofunciones}
Let $u,v \colon \R^n \to \R$ be $s$-quasi continuous functions. Then, the product $u \cdot v$ is also an $s$-quasi continuous function.  
\end{lema}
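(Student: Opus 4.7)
My plan is to proceed directly from the definition of $s$-quasi continuity by combining the exceptional sets of $u$ and $v$ and invoking subadditivity of the Gagliardo $s$-capacity.

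Since $u$ is $s$-quasi continuous, there exists a decreasing sequence $\{G_k^u\}_{k\in\N}$ of open subsets of $\Omega$ with $\cp_s(G_k^u,\Omega)\to 0$ and such that $u|_{\R^n\setminus G_k^u}$ is continuous. Analogously, there is a decreasing sequence $\{G_k^v\}_{k\in\N}$ of open sets with $\cp_s(G_k^v,\Omega)\to 0$ and $v|_{\R^n\setminus G_k^v}$ continuous. I set $H_k := G_k^u \cup G_k^v$; this is a decreasing sequence of open subsets of $\Omega$.

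The first step is to verify the capacity condition $\cp_s(H_k,\Omega)\to 0$. This follows from subadditivity of the Gagliardo $s$-capacity: given admissible test functions $\varphi_u\in C_c^\infty(\Omega)$ and $\varphi_v\in C_c^\infty(\Omega)$ for $G_k^u$ and $G_k^v$ respectively (each $\geq 1$ on a neighborhood of the corresponding set), the function $\max\{\varphi_u,\varphi_v\}$ is admissible for $H_k$, and the standard inequality for the Gagliardo seminorm of a maximum yields
$$
\cp_s(H_k,\Omega)\le \cp_s(G_k^u,\Omega)+\cp_s(G_k^v,\Omega)\to 0.
$$
(Alternatively, one can cite the subadditivity from \cite{Warma} or the appendix of this paper.)

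The second step is continuity: on $\R^n\setminus H_k = (\R^n\setminus G_k^u)\cap(\R^n\setminus G_k^v)$, both $u$ and $v$ restrict to continuous functions (since continuity is inherited by further restriction), and thus the product $u\cdot v$ is continuous on $\R^n\setminus H_k$. Hence $\{H_k\}_{k\in\N}$ witnesses that $u\cdot v$ is $s$-quasi continuous. There is no real obstacle here; the only point requiring care is the subadditivity of $\cp_s$, which is a standard capacity fact available in the paper's appendix and in \cite{Warma}.
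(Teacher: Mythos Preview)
Your proof is correct and follows essentially the same approach as the paper's: form the union $H_k=G_k^u\cup G_k^v$, use subadditivity of $\cp_s$ (the paper cites \cite[Proposition~3.6]{Warma} for this), and note that the product of continuous restrictions is continuous. One minor caveat: $\max\{\varphi_u,\varphi_v\}$ need not lie in $C_c^\infty(\Omega)$, so your direct justification of subadditivity via test functions is not quite admissible under the paper's definition of $\cp_s$, but your alternative citation to \cite{Warma} is exactly what the paper uses and suffices.
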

\begin{proof}
By definition, there exist decreasing sequences $\{A_k\}_{k\in \N}$ and $\{B_k\}_{k\in \N}$ of open sets such that $\lim_{k\to \infty}\cp_s(A_k,\Omega)=\lim_{k\to \infty} \cp_s(B_k,\Omega)=0$ and $u|_{\R^n \setminus A_k}$, $v|_{\R^n \setminus B_k}$ are continuous. 
	
Consider $C_k:=A_k\cup B_k$. Then, $\{C_k\}_{k\in \N}$ is a decreasing sequence of open sets such that $\lim_{k\to \infty}\cp_s(C_k,\Omega)=0$, since $\cp_s(C_k,\Omega)\le \cp_s(A_k,\Omega)+\cp_s(B_k,\Omega)$ by \cite[Proposition 3.6]{Warma}. Moreover, $(u \cdot v)|_{\R^n \setminus C_k}$ is continuous.
\end{proof}

%

Next lemma gives a relation between the Lebesgue measure and the $s$-capacity of a subset $A \subset \Omega$. The proof is easy and follows \cite[Section 4.7, Theorem 2 VI]{Evans-Gariepy}, where it was shown with the classical capacity measure ($s=1$). 
\begin{lema}\label{lebesguecap}
For every $A\subset \Omega$, $|A|\le C(\Omega,s) \cp_s(A,\Omega),$ where $C(\Omega,s)$ is the Poincar\'e's constant in $H_0^s(\Omega)$.
\end{lema}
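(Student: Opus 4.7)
The plan is to mimic the classical argument cited from Evans--Gariepy: pass from the geometric quantity $|A|$ to an integral bound using any admissible test function in the definition of $s$-capacity, and then invoke the fractional Poincar\'e inequality to convert the $L^2$ norm into the Gagliardo seminorm. Since $\Omega$ is bounded Lipschitz, the Poincar\'e inequality
$$
\|u\|_2^2 \le C(\Omega,s)\,[u]_s^2 \qquad \text{for all } u\in H_0^s(\Omega)
$$
is available (this is a well-known consequence of the fractional Sobolev embedding in bounded domains), and it is precisely this constant that appears in the statement.

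First, I would fix an arbitrary $u\in C_c^\infty(\Omega)$ with $u\ge 1$ in some open neighborhood $U$ of $A$. Since $A\subset U$ and $u\ge 1$ pointwise on $U$, we have $u^2\ge \chi_A$ a.e., hence
$$
|A| \;\le\; \int_{\Omega} u^2\,dx \;=\; \|u\|_2^2.
$$
Applying the fractional Poincar\'e inequality then yields
$$
|A| \;\le\; C(\Omega,s)\,[u]_s^2.
$$
Taking the infimum over all such admissible $u$ in the definition of $\cp_s(A,\Omega)$ gives the desired inequality
$$
|A| \;\le\; C(\Omega,s)\,\cp_s(A,\Omega).
$$

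The only nontrivial ingredient is the fractional Poincar\'e inequality on $H_0^s(\Omega)$; everything else is an immediate monotonicity argument on the test functions. If one wanted to make the proof self-contained, one would recall that the embedding $H_0^s(\Omega)\hookrightarrow L^2(\Omega)$ is compact for bounded $\Omega$, which together with the fact that $[\,\cdot\,]_s$ is a norm on $H_0^s(\Omega)$ (functions vanish outside the bounded set $\Omega$, so the seminorm controls the full norm) yields the Poincar\'e constant $C(\Omega,s)$. No further difficulty is expected, since the argument is identical to the $s=1$ case modulo replacing $\|\nabla u\|_2^2$ by $[u]_s^2$.
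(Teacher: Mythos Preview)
Your proof is correct and follows essentially the same approach as the paper: bound $|A|$ by $\|u\|_2^2$ using any admissible competitor $u$ for the capacity, apply the fractional Poincar\'e inequality to pass to $[u]_s^2$, and then optimize over $u$. The only cosmetic difference is that the paper phrases the optimization via an $\varepsilon$-minimizer $u_\varepsilon$ with $[u_\varepsilon]_s^2\le \cp_s(A,\Omega)+\varepsilon$ and lets $\varepsilon\downarrow 0$, whereas you take the infimum directly.
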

\begin{proof}
For every $\ve>0$, there exists a funciton $u_{\ve}\in H_0^s(\Omega)$ such that $u_{\ve}\ge1$ a.e. in a neighborhood of $A$ and
$$
[u_{\ve}]_s^2 \le \cp_s(A,\Omega)+\ve.
$$	
	
On the other hand, by Poincar\'e's inequality, 
$$
|A|=\int_{A}1 \, dx \le \int_{\R^n} u_{\ve}^2 \, dx \le C(\Omega,s)[u_{\ve}]_s^2 \le C(\Omega,s)\left( \cp_s(A,\Omega)+\ve \right).
$$
Take the limit $\ve \downarrow 0$ to obtain the result. 
\end{proof}

For every $A\in \A_s(\Omega)$, we will show that $A=\{u_A^s>0\}$ in the sense of $\cp_s(\cdot, \Omega)$.  To prove this aim, we need some previous results which are modifications from \cite[Lemma 2.1]{DalMaso-Garroni} and \cite[Proposition 5.5]{DalMaso-Murat}.

\begin{lema} \label{A.1}
Let $A\in \A_s(\Omega)$, Then, there exists an increasing sequence $\{v_k\}_{k\in \N}\subset H_0^s(\Omega)$ of non negative functions, such that $\sup_{k\in \N} v_k=1_A$ $s$-q.e. on $\Omega$. 
\end{lema}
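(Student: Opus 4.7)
The plan is to exploit the openness of each $A\cup G_k$ by combining smooth cutoffs supported there with auxiliary capacitary functions that suppress the unwanted contribution of $G_k$. First, by definition of $\A_s(\Omega)$ I fix a decreasing sequence $\{G_k\}_{k\in\N}$ of open subsets of $\Omega$ with $\cp_s(G_k,\Omega)\to 0$ and $U_k:=A\cup G_k$ open. For each $k$, pick $w_k\in C_c^\infty(\Omega)$ with $w_k\ge 1$ in a neighborhood of $G_k$ and $[w_k]_s^2\le \cp_s(G_k,\Omega)+1/k$, and set $z_k:=\min\{w_k^+,1\}\in H_0^s(\Omega)$. Then $0\le z_k\le 1$, $z_k=1$ in a neighborhood of $G_k$, and $[z_k]_s\to 0$; by Poincaré's inequality $z_k\to 0$ in $H_0^s(\Omega)$, so after passing to a subsequence, \cite[Lemma 3.8]{Warma} gives $z_k\to 0$ $s$-q.e.\ on $\Omega$.

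Next, exhaust each $U_k$ from inside by the compact sets $K_{k,j}:=\{x\in U_k:\dist(x,\R^n\setminus U_k)\ge 1/j,\ |x|\le j\}$ and choose $\phi_{k,j}\in C_c^\infty(U_k)\subset C_c^\infty(\Omega)$ with $0\le \phi_{k,j}\le 1$ and $\phi_{k,j}\equiv 1$ on $K_{k,j}$. With these pieces in hand I define
\[
v_n:=\max_{1\le k,j\le n}(\phi_{k,j}-z_k)^+,
\]
which lies in $H_0^s(\Omega)$ (lattice operations preserve $H_0^s$), takes values in $[0,1]$, and is non-decreasing in $n$ by construction.

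What remains is to verify that $\sup_n v_n=1_A$ $s$-q.e.\ on $\Omega$. For $x\notin A$, either $x\notin U_k$, so $\phi_{k,j}(x)=0$ and $(\phi_{k,j}-z_k)^+(x)=0$; or $x\in U_k\setminus A\subset G_k$, in which case $z_k(x)=1$ for $s$-q.e.\ such $x$ while $\phi_{k,j}\le 1$, so again $(\phi_{k,j}-z_k)^+(x)=0$ $s$-q.e. Thus $v_n\equiv 0$ $s$-q.e.\ on $\Omega\setminus A$. For $x\in A$ one has $x\in U_k$ for every $k$, and the openness of $U_k$ places $x$ in $K_{k,j}$ as soon as $j$ is sufficiently large, forcing $\phi_{k,j}(x)=1$; taking a further supremum in $k$ and using $z_k(x)\to 0$ for $s$-q.e.\ $x\in A$ yields $\sup_n v_n(x)=1$.

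The main obstacle I foresee is the $s$-q.e.\ verification on $A$, because it couples two independent approximations: the subsequence extraction that upgrades the norm decay $z_k\to 0$ in $H_0^s(\Omega)$ to pointwise $s$-q.e.\ decay, and the inner exhaustion of $U_k$ that lets $\phi_{k,j}(x)$ reach $1$ for every $x$ in $A$. The argument on $\Omega\setminus A$ is softer, but it still relies crucially on the $s$-q.e.\ identity $z_k=1$ on $G_k$ that is built into the construction of $z_k$; extending that identity from the $a.e.$ equality to the $s$-q.e.\ one is where the quasi-continuous representative chosen in Theorem \ref{representante} is used implicitly.
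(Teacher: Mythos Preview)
Your argument is correct and is essentially the classical construction from \cite[Lemma 2.1]{DalMaso-Garroni}, which is precisely the reference the paper cites while omitting the proof as ``completely analogous''. The only remark worth making is that your worry in the last paragraph about upgrading $z_k=1$ on $G_k$ from an a.e.\ to an $s$-q.e.\ identity is unnecessary: since $w_k\in C_c^\infty(\Omega)$, the truncation $z_k=\min\{w_k^+,1\}$ is continuous, hence its own $s$-quasi-continuous representative, and $z_k=1$ holds pointwise on a neighborhood of $G_k$.
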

We omit the proof since it is completely analogous to that of \cite[Lemma 2.1]{DalMaso-Garroni}.

%
%
%

 We prove a density result in $H_0^s(A)$, for $A\in \A_s(\Omega)$, which is similar to \cite[Proposition 5.5]{DalMaso-Murat}. 

\begin{lema} \label{A.2}
Let $A\in \A_s(\Omega)$. Then, $\{\vp u_A^s \colon \vp \in C_c^{\infty}(\Omega) \}$ is dense in $H_0^s(A)$.
\end{lema}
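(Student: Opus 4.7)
The plan is to mimic the argument of \cite[Proposition 5.5]{DalMaso-Murat} in the fractional setting, using Lemma \ref{A.1} in place of its local counterpart. The inclusion $\{\vp u_A^s : \vp \in C_c^\infty(\Omega)\} \subset H_0^s(A)$ is immediate from the identification $A=\{u_A^s>0\}$ $s$-quasi everywhere (so $\vp u_A^s=0$ $s$-q.e. on $\R^n\setminus A$), together with a fractional Leibniz estimate bounding $[\vp u_A^s]_s$ in terms of $\|\vp\|_\infty$, $\|\nabla\vp\|_\infty$, $\|u_A^s\|_2$ and $[u_A^s]_s$; here I use that $u_A^s\in L^\infty(\Omega)$ by comparison with the fractional torsion function of $\Omega$.

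For the density, let $u\in H_0^s(A)$; splitting $u=u^+-u^-$ one may assume $u\ge 0$. First, I truncate by $u_M:=u\wedge (M u_A^s)$ for $M\in\N$. Since $u=0$ $s$-q.e. on $\R^n\setminus A$ while $u_A^s>0$ $s$-q.e. on $A$, the sequence $\{u_M\}$ is increasing and $u_M\uparrow u$ $s$-q.e. on $\R^n$; dominated convergence on the Gagliardo double integral (with dominator $|u(x)-u(y)|^2$) yields $u_M\to u$ strongly in $H_0^s(\Omega)$, reducing the problem to approximating each bounded $u_M$. With $M$ fixed, I regularize by $g_\delta:=u_M/(u_A^s+\delta)$ for $\delta>0$; a fractional quotient estimate shows $g_\delta\in H_0^s(\Omega)\cap L^\infty(\Omega)$ with $\|g_\delta\|_\infty\le M$. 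The identity $g_\delta u_A^s=u_M-\delta g_\delta$, combined with $|\delta g_\delta|\le u_M$ and $\delta g_\delta\to 0$ $s$-q.e. as $\delta\downarrow 0$, gives $g_\delta u_A^s\to u_M$ in $H_0^s(\Omega)$ by another dominated convergence argument in the Gagliardo seminorm (controlling $|\delta g_\delta(x)-\delta g_\delta(y)|$ by $|u_M(x)-u_M(y)|+M\,|u_A^s(x)-u_A^s(y)|$). Finally, approximating each $g_\delta$ in $H_0^s$ by $\vp_n^\delta\in C_c^\infty(\Omega)$ with $\|\vp_n^\delta\|_\infty\le M+1$ (via the standard density together with a truncation) and a diagonal extraction produce the desired sequence.

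The main obstacle is converting the $H_0^s$-approximation $\vp_n^\delta\to g_\delta$ into an $H_0^s$-approximation $\vp_n^\delta u_A^s\to g_\delta u_A^s$, since multiplication by the non-smooth factor $u_A^s$ is not \emph{a priori} continuous on $H_0^s(\Omega)$. To overcome it, one exploits the uniform $L^\infty$-bounds on both $u_A^s$ and $\vp_n^\delta$: splitting
$$
(\vp_n^\delta-g_\delta)(x)\,u_A^s(x)-(\vp_n^\delta-g_\delta)(y)\,u_A^s(y)=[(\vp_n^\delta-g_\delta)(x)-(\vp_n^\delta-g_\delta)(y)]\,u_A^s(x)+(\vp_n^\delta-g_\delta)(y)\,[u_A^s(x)-u_A^s(y)],
$$
one controls the first term by $\|u_A^s\|_\infty^2\,[\vp_n^\delta-g_\delta]_s^2$ and the second by a H\"older estimate against the $L^1$-density $y\mapsto \int |u_A^s(x)-u_A^s(y)|^2/|x-y|^{n+2s}\,dx$, using fractional Sobolev embedding for a suitable exponent. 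This secures the continuity needed for the diagonal argument to close.
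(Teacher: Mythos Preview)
Your truncation step contains a genuine gap. The claimed dominator $|u(x)-u(y)|^2$ for the Gagliardo integrand of $u_M-u$ is false: take points $x,y$ with $u(x)=u(y)$ but $u_A^s(x)\neq u_A^s(y)$ and both $Mu_A^s(x),Mu_A^s(y)<u(x)$; then $(u_M-u)(x)-(u_M-u)(y)=M\bigl(u_A^s(x)-u_A^s(y)\bigr)\neq 0$ while $u(x)-u(y)=0$. In general one only has $|u_M(x)-u_M(y)|\le |u(x)-u(y)|+M\,|u_A^s(x)-u_A^s(y)|$, and the resulting dominator blows up with $M$. Since the pointwise bound $0\le u_M\le u$ gives no uniform control on $[u_M]_s$, the strong convergence $u_M\to u$ in $H_0^s(\Omega)$ is unproven, and Steps~2--3 cannot begin.

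The paper avoids this difficulty by a different reduction: rather than truncating against $Mu_A^s$, it first passes (via density of $L^\infty(\Omega)$ in $H^{-s}(\Omega)$ and the solution operator on $A$) to nonnegative $w\in H_0^s(A)$ with $(-\Delta)^s w\in L^\infty(\Omega)$, after which the comparison principle yields \emph{directly} $0\le w\le c\,u_A^s$ with $c=\|(-\Delta)^s w\|_{L^\infty}$, so no limit in $M$ is needed. One then works with $(w-c\ve)^+$, supported in $\{u_A^s>\ve\}$; since $u_A^s\in L^\infty(\Omega)$ and is bounded below by $\ve$ on that set, the quotient $(w-c\ve)^+/u_A^s$ lies in $H_0^s(\Omega)$ and can be approximated by $\vp_k^\ve\in C_c^\infty(\Omega)$, and finally $(w-c\ve)^+\to w$ as $\ve\downarrow 0$. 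Your last paragraph --- securing the continuity of multiplication by $u_A^s$ via uniform $L^\infty$ bounds on the approximants and dominated convergence against $\int |u_A^s(x)-u_A^s(y)|^2/|x-y|^{n+2s}\,dx\in L^1_y$ --- is actually more careful than the paper's own treatment of that passage and grafts onto the paper's scheme without change; but it does not rescue your Step~1.
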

\begin{proof}
In order to prove the lemma, it is sufficient to see that we can approximate any non negative function  $w\in H_0^s(A)$ with $(-\Delta)^s w\in L^{\infty}(\Omega)$, since $L^{\infty}(\Omega)$ is dense in $H^{-s}(\Omega)$ and $w=w^+ -w^-$. Indeed, for an arbitrary function $w\in H_0^s(\Omega)$, we know that $(-\Delta)^s w=:f \in H^{-s}(\Omega)$. 

Denote by $f:=(-\Delta)^s w$. Then, 
$$
(-\Delta)^s w \le \|f\|_{L^{\infty}(\Omega)} = \|f\|_{L^{\infty}(\Omega)} (-\Delta)^s u_A^s \quad \mbox{ in } A.
$$
By comparison, we obtain  $0\le w\le c u_A^s$, where $c:=\|f\|_{L^{\infty}(\Omega)}$.

For every $\ve>0$, consider $(w-c\ve)^+ \in H_0^s(\Omega)$. Thus,
\begin{equation}\label{inclusion}
\{ (w-c\ve)^+>0\} \subset \{ u_A^s >\ve \}.
\end{equation} 
Notice that $u_A^s \in L^{\infty}(\Omega)$ by \cite[Theorem 4.1]{DiBlasio-Volzone}. Observe that, using \eqref{inclusion}, $\ve<u_A^s \le \|u_A^s\|_{L^{\infty}(\Omega)}$ in $\{(w-c\ve)^+>0\}$. Then, the function $\frac{(w-c\ve)^+}{u_A^s}$ belongs to $H_0^s(\Omega)$.  So, there exists a sequence $\{\vp_k^{\ve}\}_{k\in \N} \subset C_c^{\infty}(\Omega)$ such that $\vp_k^{\ve}  \to \frac{(w-c\ve)^+}{u_A^s}$ strongly in $H_0^s(\Omega)$, when $k\to \infty$. Therefore, $\vp_k^{\ve} u_A^s \to (w-c\ve)^+$ strongly in $H_0^s(\Omega)$, when $k\to \infty$.

On the other hand, $(w-c\ve)^+\to w$ strongly in $H_0^s(\Omega)$, when $\ve\downarrow 0$. 

Consequently, by a diagonal argument, there exist subsequences $\ve_{j}\downarrow 0$ and $\{ \vp_{k_j}^{\ve_{j}}\}_{j\in \N} \subset C_c^{\infty}(\Omega)$ such that $\vp_{k_j}^{\ve_j} u_A^s \to w$ strongly in $H_0^s(\Omega)$. 
\end{proof}

The following proposition is an essential component to relate domains and functions, and  it also contributes to the proofs of the principal results Theorems \ref{main.s} and \ref{main}.

\begin{prop}\label{propA}
Let $A \in \A_s(\Omega)$. Then, $A=\{u_A^s>0\}$ in sense of $\cp_s(\cdot, \Omega)$. That is, $\cp_s(A \triangle \{u_A^s>0\}, \Omega)=0$.
\end{prop}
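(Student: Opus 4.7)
The plan is to establish the two inclusions $\{u_A^s>0\}\subset A$ and $A\subset\{u_A^s>0\}$ separately, both understood modulo sets of vanishing Gagliardo $s$-capacity, and then conclude that $\cp_s(A\triangle\{u_A^s>0\},\Omega)=0$. Throughout I work with $s$-quasi-continuous representatives (Theorem \ref{representante}).

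\textbf{Easy inclusion.} Since $u_A^s$ is by definition an element of $H_0^s(A)$, the defining condition $u_A^s=0$ $s$-q.e.\ in $\R^n\setminus A$ gives immediately $\{u_A^s>0\}\subset A$ outside a set of vanishing $s$-capacity, and hence $\cp_s(\{u_A^s>0\}\setminus A,\Omega)=0$.

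\textbf{Hard inclusion.} I need to show $\cp_s(A\cap\{u_A^s=0\},\Omega)=0$. Let $\{v_k\}_{k\in\N}\subset H_0^s(\Omega)$ be the increasing non-negative sequence provided by Lemma \ref{A.1}, so that $\sup_k v_k=\mathbf{1}_A$ $s$-q.e.\ in $\Omega$. Replacing $v_k$ by $\min\{v_k,\mathbf{1}_A\}$ (whose $s$-quasi-continuous representative agrees $s$-q.e.\ with $v_k$) we may assume $v_k=0$ $s$-q.e.\ in $\R^n\setminus A$, i.e.\ $v_k\in H_0^s(A)$. By Lemma \ref{A.2}, for each $k$ there is a sequence $\{\vp_j^{(k)}\}_{j\in\N}\subset C_c^\infty(\Omega)$ with $\vp_j^{(k)}u_A^s\to v_k$ strongly in $H_0^s(\Omega)$ as $j\to\infty$. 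Since $\vp_j^{(k)}u_A^s$ vanishes identically on $\{u_A^s=0\}$, invoking the $s$-q.e.-convergence of a subsequence in the spirit of \cite[Lemma 3.8]{Warma} yields $v_k=0$ $s$-q.e.\ on $\{u_A^s=0\}$.

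\textbf{Conclusion.} Passing to the supremum in $k$, one obtains $\mathbf{1}_A=0$ $s$-q.e.\ on $\{u_A^s=0\}$, which is precisely the statement $\cp_s(A\cap\{u_A^s=0\},\Omega)=0$. Combining with the easy inclusion gives $\cp_s(A\triangle\{u_A^s>0\},\Omega)=0$.

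\textbf{Main obstacle.} The delicate step is the second one: turning the approximation in Lemma \ref{A.2} (which lives in the norm topology of $H_0^s(\Omega)$) into pointwise information on the null set $\{u_A^s=0\}$. This requires careful use of the quasi-continuity framework, namely that strong $H_0^s$-convergence admits a subsequence converging $s$-q.e., since $s$-capacity is the right notion to make sense of ``$w=0$ on $\{u_A^s=0\}$'' for a function $w$ defined up to a.e.\ equivalence. Once this pointwise implication is in hand, taking the countable supremum over $k$ is routine because a countable union of sets of zero $s$-capacity still has zero $s$-capacity.
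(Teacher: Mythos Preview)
Your proof is correct and follows essentially the same route as the paper: both directions are handled identically, invoking Lemma~\ref{A.1} for the increasing approximation of $\mathbf{1}_A$, Lemma~\ref{A.2} to approximate each $v_k$ by products $\vp u_A^s$, and $s$-q.e.\ convergence of a subsequence (as in \cite[Lemma~3.8]{Warma}) to transfer the vanishing on $\{u_A^s=0\}$. Your extra step replacing $v_k$ by $\min\{v_k,\mathbf{1}_A\}$ is harmless but superfluous: since the $v_k$ are non-negative, increasing, with supremum $\mathbf{1}_A$ $s$-q.e., one already has $0\le v_k\le\mathbf{1}_A$ $s$-q.e., hence $v_k\in H_0^s(A)$ directly.
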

\begin{proof}
It is clear that $u_A^s=0$ $s$-q.e. on $\R^n \setminus A$. So, $\{u_A^s>0\} \subset A$.
		
To see $A\subset \{u_A^s>0 \}$, we use the previous lemmas. 
	
By Lemma \ref{A.1}, there exists an increasing sequence $\{v_k\}_{k\in \N}\subset H_0^s(\Omega)$ of non negative functions, such that $\sup_{k\in \N} v_k=1_A$ $s$-q.e. on $\Omega$. 
	
For every $v_k$, by Lemma \ref{A.2}, there exists a sequence $\{\vp_{j}^k\}_{j\in \N}\in C_c^{\infty}(\Omega)$ such that $\vp_{j}^k u_A^s \to v_k$ strongly in $H_0^s(\Omega)$ and $s$-q.e., when $j\to \infty$. Since $\vp_{j}^k u_A^s=0 $ $s$-q.e. in $\{u_A^s=0\}$, then $v_k=0$  $s$-q.e. in $\{u_A^s=0\}$. Therefore, $1_A=0$ $s$-q.e. in $\{u_A^s=0\}$, which implies $A\subset \{u_A^s>0\}$. 
\end{proof}

Now, we prove a key estimate used in Section \ref{sec.teo2}, which is a simply remark following the proof of \cite[Proposition 2.2]{DiNezza-Palatucci-Valdinoci}. Notice that we are interested in finding a positive constant connecting in some sense $\cp_s(\cdot, \Omega)$ and $\cp_1(\cdot,\Omega)$. But, we also want that this constant does not depend on $s$. As our goal in Section \ref{sec.teo2} is related to the limit case $s\uparrow 1$, we can assume $0<\ve_0<s<1$ for some $\ve_0$ and that will be enough to obtain this desired and \textit{independent} constant. 

As we said before, the proof of next lemma follows \cite[Proposition 2.2]{DiNezza-Palatucci-Valdinoci} and, despite of the similarity, it is included since we want to analyze how the constant depends on $s$. 

\begin{lema}\label{key}
Let $\ve_0>0$ and $\ve_0 <s <1$. Then, there exits a constant $C>0$ such that for every $u\in H_0^1(\Omega)$
$$
(1-s)[u]_s^2 \le C \|\nabla u\|_{L^2(\Omega)}^2.
$$
and $C=C(\Omega,n,\ve_0)$ does not depend on $s$.
\end{lema}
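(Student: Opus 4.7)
The plan is to follow the proof of \cite[Proposition 2.2]{DiNezza-Palatucci-Valdinoci} but track every constant carefully to show that, after multiplying by $(1-s)$, no quantity blows up when $s$ ranges in $(\ve_0,1)$. Since $u\in H_0^1(\Omega)$, extend $u$ by zero to obtain a function in $H^1(\R^n)$ with the same $L^2$ and gradient norms. Then write the Gagliardo integral in the translation variable $h=y-x$ and split it according to whether $|h|<1$ or $|h|\ge 1$:
$$
[u]_s^2=\int_{|h|<1}|h|^{-n-2s}\!\int_{\R^n}|u(x+h)-u(x)|^2\,dx\,dh+\int_{|h|\ge1}|h|^{-n-2s}\!\int_{\R^n}|u(x+h)-u(x)|^2\,dx\,dh.
$$

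For the near-field term I would use the pointwise inequality $|u(x+h)-u(x)|^2\le |h|^2\int_0^1|\nabla u(x+th)|^2\,dt$, which after Fubini and a change of variables in $x$ gives $\int_{\R^n}|u(x+h)-u(x)|^2\,dx\le |h|^2\|\nabla u\|_{L^2}^2$. The resulting radial integral is $\int_{|h|<1}|h|^{2-n-2s}\,dh=\frac{|\mathbb{S}^{n-1}|}{2(1-s)}$, so the near-field piece contributes at most $\frac{|\mathbb{S}^{n-1}|}{2(1-s)}\|\nabla u\|_{L^2}^2$. The crucial $(1-s)^{-1}$ is exactly the factor that will be killed when we multiply by $(1-s)$. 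For the far-field term I would use $|u(x+h)-u(x)|^2\le 2(|u(x+h)|^2+|u(x)|^2)$, yielding $\int_{\R^n}|u(x+h)-u(x)|^2\,dx\le 4\|u\|_{L^2}^2$, and then $\int_{|h|\ge1}|h|^{-n-2s}\,dh=\frac{|\mathbb{S}^{n-1}|}{2s}$; thus the far-field piece is bounded by $\frac{2|\mathbb{S}^{n-1}|}{s}\|u\|_{L^2}^2$.

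Putting the two estimates together gives
$$
(1-s)[u]_s^2\le\frac{|\mathbb{S}^{n-1}|}{2}\|\nabla u\|_{L^2(\Omega)}^2+\frac{2(1-s)|\mathbb{S}^{n-1}|}{s}\|u\|_{L^2(\Omega)}^2.
$$
At this point I would use $s>\ve_0$ to bound $\frac{1-s}{s}\le\frac{1}{\ve_0}$ (independent of $s$), and Poincaré's inequality in $H_0^1(\Omega)$ to replace $\|u\|_{L^2(\Omega)}^2$ by $C(\Omega)\|\nabla u\|_{L^2(\Omega)}^2$. The final constant is then $C=C(\Omega,n,\ve_0)$ as required.

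The only subtle point, and the reason the statement restricts $s>\ve_0$, is exactly that second integral: the factor $1/s$ cannot be killed by $(1-s)$, so some lower bound on $s$ is needed; however the dependence is mild, since we only need $s$ bounded away from $0$ to keep $1/s$ controlled. Everything else is bookkeeping of the two radial integrals and a routine application of Poincaré.
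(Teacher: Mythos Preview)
Your proof is correct and follows essentially the same route as the paper's: the same near/far splitting at $|h|=1$, the same mean-value/Jensen estimate for the near part producing the $\tfrac{1}{2(1-s)}$ factor, the same $|a-b|^2\le 2(a^2+b^2)$ bound for the far part producing the $\tfrac{1}{2s}$ factor, and then the same use of $s>\ve_0$ together with Poincar\'e to finish. The only cosmetic differences are that the paper writes $|B_1(0)|$ where you (correctly) write $|\mathbb{S}^{n-1}|$ and drops a harmless factor of $4$ in the far-field bookkeeping.
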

\begin{proof}
Let $u\in C_c^{\infty}(\Omega)$, we split $[u]_s^2$ into two pieces.
	
For the first part, use the change of variable $z=y-x$ and observe that for $z\in B_1(0)\setminus \{0\}$ and $\vp(t):=u(x+tz)$ for $t\in [0,1]$ we estimate 
$$
\frac{|u(x+z)-u(x)|}{|z|}=\frac{\left|\int_0^1 \vp'(t) \, dt \right|}{|z|}= \frac{\left|\int_0^1 \nabla u(x+tz)\cdot z \, dt \right|}{|z|}\le \int_0^1 | \nabla u(x+tz)|\, dt.
$$
Now, use the previous remark and Jensen's inequality to obtain
\begin{align*}
\int_{\R^n}\int_{\R^n \cap \{|y-x|<1\}} \frac{|u(x)-u(y)|^2}{|x-y|^{n+2s}} \, dx dy &= \int_{\R^n}\int_{B_1(0)} \frac{|u(x)-u(z+x)|^2}{|z|^{n+2s}} \, dz dx\\
&= \int_{\R^n}\int_{B_1(0)} \frac{|u(x)-u(z+x)|^2}{|z|^{2}|z|^{n+2(s-1)}}  \, dz dx\\
&\le \int_{\R^n}\int_{B_1(0)} \left(\int_0^1\frac{|\nabla u(x+tz)|}{|z|^{\frac{n}{2}+s-1}} \, dt\right)^2 dz dx\\
&\le \int_{B_1(0)} \frac{1}{|z|^{n+2(s-1)}} \int_0^1 \|\nabla u\|_{L^2(\Omega)}^2  dt dz \\
&\le \|\nabla u\|_{L^2(\Omega)}^2\int_{B_1(0)} \frac{1}{|z|^{n+2(s-1)}} dz \\
&=\frac{|B_1(0)|}{2(1-s)}\|\nabla u\|_{L^2(\Omega)}.
\end{align*}	
	
For the remaining part, use $|a-b|^2\le 2(a^2+b^2)$ and easily follows 
\begin{align*}
\int_{\R^n}\int_{\R^n \cap \{|y-x|\ge1\}} \frac{|u(x)-u(y)|^2}{|x-y|^{n+2s}} \, dx dy &\le 2\int_{\R^n}\int_{\R^n \cap \{|y-x|\ge1\}} \frac{|u(x)|^2+|u(y)|^2}{|x-y|^{n+2s}} \, dx dy\\
&\le 4 \int_{\R^n}\int_{\R^n \cap \{|y-x|\ge1\}} \frac{|u(x)|^2}{|x-y|^{n+2s}} \, dx dy\\
&\le \int_{\R^n}|u(x)|^2\left( \int_{ \{|z|\ge1\}}  \frac{1}{|z|^{n+2s}} dz\right)\, dx \\
&=\frac{|B_1(0)|}{2s}\| u\|_{L^2(\Omega)} \\
&\le \frac{|B_1(0)|}{2\ve_0}C_1(\Omega,n) \|\nabla u\|_{L^2(\Omega)},
\end{align*}	
where $C_1(\Omega,n)$ is the constant of the classical Poincar\'e's inequality in $H_0^1(\Omega)$.
	
Then, put together the two estimates to conclude
\begin{align*}
(1-s)[u]_s^2 &\le (1-s) \left( \frac{C_1(\Omega,n)}{2\ve_0}+\frac{1}{2(1-s)}\right)|B_1(0)|\|\nabla u\|_{L^2(\Omega)}\\
&\le \left(\frac{C_1(\Omega,n)}{2\ve_0}+\frac{1}{2}\right)|B_1(0)|\|\nabla u\|_{L^2(\Omega)}\\
&= C(\Omega,n,\ve_0) \|\nabla u\|_{L^2(\Omega)}.
\end{align*}
\end{proof}

Automatically, we obtain an estimate relating the $s$-capacity and the $1$-capacity.

\begin{corol}\label{keycorol}
Let $\ve_0>0$ and $\ve_0 <s <1$. Then, there exits a constant $C>0$ such that for every $A\subset \Omega$
$$
(1-s)\cp_s(A,\Omega) \le C \cp_1(A,\Omega),
$$
and $C=C(\Omega,n,\ve_0)$ does not depend on $s$.
\end{corol}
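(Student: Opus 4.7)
The plan is to use Lemma \ref{key} pointwise on admissible test functions and then take infima. Since both capacities $\cp_s(A,\Omega)$ and $\cp_1(A,\Omega)$ are defined through the same class of admissible functions, namely $u\in C_c^\infty(\Omega)$ with $u\ge 1$ in a neighborhood of $A$, the passage from the seminorm inequality to the capacity inequality is essentially immediate.

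More concretely, I would first fix $A\subset \Omega$ and $\delta>0$, and choose a test function $u\in C_c^\infty(\Omega)$ with $u\ge 1$ in a neighborhood of $A$ such that $\|\nabla u\|_{L^2(\Omega)}^2 \le \cp_1(A,\Omega)+\delta$; such a function exists by definition of the classical capacity. Since $u\in C_c^\infty(\Omega)\subset H_0^1(\Omega)$ and $\varepsilon_0<s<1$, Lemma \ref{key} applies and gives
\begin{equation*}
(1-s)[u]_s^2 \le C\,\|\nabla u\|_{L^2(\Omega)}^2 \le C\bigl(\cp_1(A,\Omega)+\delta\bigr),
\end{equation*}
with $C=C(\Omega,n,\varepsilon_0)$ independent of $s$.

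Next, noting that the same $u$ is admissible for the Gagliardo $s$-capacity, the definition of $\cp_s(A,\Omega)$ yields $\cp_s(A,\Omega)\le [u]_s^2$, and multiplying by $(1-s)>0$ gives
\begin{equation*}
(1-s)\cp_s(A,\Omega) \le (1-s)[u]_s^2 \le C\bigl(\cp_1(A,\Omega)+\delta\bigr).
\end{equation*}
Letting $\delta\downarrow 0$ yields the desired inequality $(1-s)\cp_s(A,\Omega)\le C\cp_1(A,\Omega)$, with $C$ depending only on $\Omega$, $n$ and $\varepsilon_0$.

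There is really no significant obstacle here: the key input (the seminorm comparison with an $s$-independent constant) is provided by Lemma \ref{key}, and the only point to watch is that the admissible class of test functions is the same $C_c^\infty(\Omega)$ in both definitions, so no mollification or approximation argument is required. If $\cp_1(A,\Omega)=+\infty$ the inequality is trivial, so one may tacitly assume $\cp_1(A,\Omega)<\infty$ when choosing the approximating $u$.
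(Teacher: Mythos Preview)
Your argument is correct and is precisely the intended one: the paper does not spell out a proof but simply states that the corollary follows ``automatically'' from Lemma~\ref{key}, and your $\delta$-approximation-plus-infimum argument is exactly the routine verification behind that word.
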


We deduce other useful remark from Lemma \ref{key}: every $1$-quasi open set is also an $s$-quasi open, for $0<s<1$.

\begin{remark}\label{A1subsetAs}
For every $0<s<1$, $\A_1(\Omega)\subset \A_s(\Omega)$. Moreover, if $0<s<t\le 1$, then $\A_t(\Omega) \subset \A_s(\Omega)$.
\end{remark}
\begin{proof}
Let $A\in\A_1(\Omega)$. There exists a decreasing sequence of open sets $\{G_k\}_{k\in\N}$ such that $A\cup G_k$ is open and $\cp_1(G_k,\Omega)\to0$. 

Let $0<s<1$. By Corollary \ref{keycorol}, $\cp_s(G_k,\Omega)\to 0$. Then, $A\in \A_s(\Omega)$.

To prove $\A_t(\Omega) \subset \A_s(\Omega)$ for $0<s<t\le 1$, use definitions of capacity and \cite[Proposition 2.1]{DiNezza-Palatucci-Valdinoci}.
	
\end{proof}


\section*{Acknowledgements}
This paper was partially supported by grants UBACyT 20020130100283BA, CONICET PIP 11220150100032CO and ANPCyT PICT 2012-0153. The author wants to thank Prof. Juli\'an Fern\'andez Bonder for helpful conversations.  

A. Ritorto is a doctoral fellow of CONICET.

\bibliographystyle{amsplain}
\bibliography{biblio}

\end{document}